\documentclass[11pt]{amsart}

\usepackage{amsmath,amsfonts,amssymb,enumerate}
\usepackage{amsthm,dsfont}
\usepackage{latexsym, mathtools,verbatim,graphicx}
\usepackage{amscd,hyperref}
\usepackage{bbold}
\usepackage{bbm}
\usepackage{color}
\usepackage{enumitem}
\bibliographystyle{siam}

\addtolength{\textheight}{.4in}
\addtolength{\topmargin}{-.2in}
\addtolength{\textwidth}{1.6in}
\addtolength{\oddsidemargin}{-.7in}
\addtolength{\evensidemargin}{-.7in}
\parindent=12pt

\theoremstyle{plain}
\newtheorem{theorem}{Theorem}[section]
\newtheorem{thm}[theorem]{Theorem}

\newtheorem{corollary}[theorem]{Corollary}

\newtheorem{prop}[theorem]{Proposition}
\theoremstyle{definition}
\newtheorem{conj}[theorem]{Conjecture}

\theoremstyle{remark}

  {\hfill \fbox{}}

\newcommand{\C}{\mathbb{C}}

\newcommand{\T}{\mathbb{T}}
\newcommand{\D}{\mathbb{D}}
\newcommand{\N}{\mathbb{N}}

\newcommand{\Toep}{\mathrm{Toep}}

\begin{document}
\title[Toeplitz factors]{On the minimum number of Toeplitz factors of a matrix}

\author[Garc\'ia-Marco]{Ignacio Garc\'ia-Marco}
\author[M\'arquez-Corbella]{Irene M\'arquez-Corbella}
\author[Seco]{Daniel Seco}
\address{Universidad de la Laguna e IMAULL \newline  Avenida Astrof\'isico Francisco S\'anchez, s/n.  \newline Facultad de Ciencias, secci\'on: Matem\'aticas, apdo. 456.  \newline 38200 San Crist\'obal de La Laguna \newline
Santa Cruz de Tenerife,  Spain} \email{iggarcia@ull.edu.es} \email{imarquec@ull.edu.es} \email{dsecofor@ull.edu.es}

\begin{abstract}
We disprove a conjecture by Ye and Lim, by showing that there are $3 \times 3$ complex matrices which can't be expressed as the product of two Toeplitz matrices of the same size. We also improve previous estimates by Ye and Lim on the minimum number of Toeplitz matrices needed to factor any $n \times n$ matrix, for low values of $n$.
\end{abstract}

\thanks{The first and second authors are supported by the project MICINN PID2023-149508NB-I00, funded by MICIU/AEI/10.13039/501100011033 and by FEDER, EU. The third author is funded by grant PID2024-160185NB-I00 through the Generaci\'on de Conocimiento programme and by grant RYC2021-034744-I of the Ram\'on y Cajal programme from Agencia Estatal de Investigaci\'on (Spanish Ministry of Science, Innovation and Universities).}

\subjclass{Primary 15B05; Secondary 15A23, 30C15.}

\keywords{Toeplitz matrices, factorization, optimal polynomial approximants}

\date{\today}

\maketitle

\section{Introduction}

We say that $M \in \C^{n \times n}$ is a \emph{Toeplitz} matrix if \begin{equation}\label{eqn100}
M_{i,j}=M_{i+1,j+1}\qquad \text{ for all } \qquad i,j=1,...,n-1.\end{equation} Such matrices play an important role in many areas of mathematics and their applications, and specific algorithms developed for this class of matrices are available \cite{HeinigRost}. In particular, some classical algorithms by Levinson and Schur provide very efficient solutions to linear systems given by a Toeplitz matrix (on the order of $n^2$ floating point operations). This would beat, for instance, usual general algorithms such as gaussian elimination, which require an amount of the order of $n^3$ flops. More modern approaches allow one to solve linear systems involving Toeplitz matrices in $\mathcal O(n {\rm log}^2(n))$ operations (see \cite{BO}). As a natural consequence, there is great interest on finding factorizations of other matrices as products of Toeplitz ones.
Decomposing matrices as products of structured matrices has been proven to be a very effective technique in computational and numerical algebra and, for example, decompositions of matrices as the product of a lower-triangular and an upper-triangular matrix (LU), or of an orthogonal and an upper-triangular matrix (QR), or of two orthogonal matrices with a diagonal one (SVD), are ubiquitous in many numerical applications. Toeplitz matrices are one of the most well-studied and understood classes of structured matrices and present several interesting computational advantages. The decompositions we will focus on have many attractive computational properties and are precious in computational algebra. We will call $(T_1, \dots , T_s)$ a \emph{Toeplitz decomposition} of $M$ (of length $s$) if $T_i$ is Toeplitz for $i=1,\dots, s$, and $M= T_1 \cdot T_2 \cdot \dots  T_s$. The interest lies in that if one knows such a decomposition for a matrix $M$, then one can solve linear systems in $M$ within $\mathcal O(sn ({\rm log}^2(n))$ operations. 

A notation we will use throughout the paper is that for a matrix $M \in \C^{n \times n}$, the \emph{Toeplitz number} of $M$, $\Toep(M)$, is the minimal number of Toeplitz matrices needed to factorize $M$ as their product, that is, the minimal length of its Toeplitz decompositions. When we take the maximum of $\Toep(M)$ over all $n \times n$ matrices, we obtain the \emph{maximal Toeplitz number} of order $n$, which we denote by $\Toep_n$. Until Ye and Lim's striking article \cite{YeLim}, one could have expected $\Toep(M)$ to be infinite for some matrix $M$, or that it may take unbounded values for a fixed $n$. However, they showed the following: 
\begin{thm}[Ye-Lim]\label{YeLim1} Let $n \in \N$ and $M \in \C^{n \times n}$. Then
\[\Toep_{n}\leq 4\left\lfloor \frac{n}{2} \right\rfloor+5.\]
 Moreover, if $M$ is invertible, then \[\Toep(M) \leq 2\left\lfloor \frac{n}{2} \right\rfloor+2;\] and finally, for a generic $M$, \[\Toep(M) \leq \left\lfloor \frac{n}{2} \right\rfloor +1.\]
\end{thm}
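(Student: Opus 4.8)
The plan is to establish the three bounds in order of decreasing strength of the hypothesis on $M$, arranging matters so that essentially all the real work sits in the generic statement. Write $\mathcal{T}\subset\C^{n\times n}$ for the $(2n-1)$-dimensional space of Toeplitz matrices, set $r=\lfloor n/2\rfloor+1$, and consider the multiplication morphism
\[
\Phi_r\colon \mathcal{T}^{\,r}\longrightarrow \C^{n\times n},\qquad (T_1,\dots,T_r)\longmapsto T_1T_2\cdots T_r .
\]
The generic claim $\Toep(M)\le r$ says precisely that $\Phi_r$ is dominant, i.e.\ that its image contains a Zariski-dense open set; over $\C$ this is equivalent to exhibiting a single point $(T_1^0,\dots,T_r^0)$ at which the differential $d\Phi_r$ is surjective, for there $\Phi_r$ is a submersion and its (constructible) image then contains a dense open neighbourhood of $\Phi_r(T^0)$. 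The necessary dimension count is favourable: $\dim\mathcal{T}^{\,r}=r(2n-1)=(\lfloor n/2\rfloor+1)(2n-1)\ge n^2$, with a linear amount of slack, so surjectivity of $d\Phi_r$ is not obstructed on dimensional grounds.

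For the generic case I would then compute $d\Phi_r$ at a carefully chosen base point. Taking all $T_i^0$ invertible and writing $P_i=T_1^0\cdots T_{i-1}^0$ and $Q_i=T_{i+1}^0\cdots T_r^0$ (empty products being $I$), the image of $d\Phi_r$ is the linear span
\[
\sum_{i=1}^{r}P_i\,\mathcal{T}\,Q_i \subseteq \C^{n\times n},
\]
a sum of $r$ images of $\mathcal{T}$ under the maps $X\mapsto P_iXQ_i$, each of dimension $2n-1$ since $P_i,Q_i$ are invertible; the extreme terms $i=1,r$ already contribute $\mathcal{T}Q_1$ and $P_r\mathcal{T}$. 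The task is to choose the $T_i^0$ --- either generically, or explicitly, say from unit lower- and upper-triangular Toeplitz matrices, which at least keeps the $P_i,Q_i$ invertible --- so that these $r$ subspaces lie in sufficiently general position that their sum is all of $\C^{n\times n}$. (Using full $(2n-1)$-parameter Toeplitz factors rather than the $n$-parameter triangular ones is exactly what improves the count by a factor of two, from the $\approx n$ factors of an alternating triangular factorisation down to $\approx n/2$.) This is the crux, and I expect it to be the main obstacle: the slabs $P_i\,\mathcal{T}\,Q_i$ are coupled through the shared factors $T_j^0$, so one cannot simply invoke genericity of $r$ independent $(2n-1)$-planes. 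Instead one would track how consecutive slabs combine, e.g.\ $P_i\,\mathcal{T}\,Q_i+P_{i+1}\,\mathcal{T}\,Q_{i+1}=P_i\bigl(\mathcal{T}\,T_{i+1}^0+T_i^0\,\mathcal{T}\bigr)Q_{i+1}$, check that $\dim(\mathcal{T}\,T'+T''\,\mathcal{T})=2(2n-1)$ for suitably generic $T',T''$ (two generic $(2n-1)$-planes in $\C^{n^2}$ meet only in $0$ once $n\ge 4$), and verify that iterating this gains the expected dimension at each step, so that the count closes exactly at $r=\lfloor n/2\rfloor+1$; the finitely many small $n$ are handled directly.

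Granting the generic case, let $\mathcal{U}\subseteq\C^{n\times n}$ be the resulting dense open set on which $\Toep(\cdot)\le r$. If $M$ is invertible, factor $M=A\cdot(A^{-1}M)$: the set of invertible $A$ with both $A\in\mathcal{U}$ and $A^{-1}M\in\mathcal{U}$ is the intersection of the two dense open subsets $\mathcal{U}\cap\mathrm{GL}_n(\C)$ and $M\cdot(\mathcal{U}\cap\mathrm{GL}_n(\C))^{-1}$ of $\mathrm{GL}_n(\C)$, hence nonempty; for such an $A$, $\Toep(M)\le\Toep(A)+\Toep(A^{-1}M)\le 2r=2\lfloor n/2\rfloor+2$. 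For arbitrary $M$ of rank $\rho$, use the $\mathrm{GL}_n\times\mathrm{GL}_n$ normal form $M=A\,Z^{\,n-\rho}\,B$ with $A,B$ invertible and $Z$ the nilpotent down-shift, so that $Z^{\,n-\rho}$ is a Toeplitz matrix of rank $\rho$; then $\Toep(M)\le\Toep(A)+1+\Toep(B)\le(2\lfloor n/2\rfloor+2)+1+(2\lfloor n/2\rfloor+2)=4\lfloor n/2\rfloor+5$, and maximising over $M$ bounds $\Toep_n$. So everything funnels into producing, in the generic case, one base point with surjective differential; a constructive alternative giving the same three bounds would be a Gauss-elimination-style reduction of a generic matrix to the identity by full Toeplitz factors, each able to clear roughly two off-diagonals, which again uses about $n/2$ of them.
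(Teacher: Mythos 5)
This theorem is not proved in the paper at all --- it is quoted from Ye and Lim \cite{YeLim} --- so the only question is whether your argument stands on its own, and it does not: there is a genuine gap exactly where you locate "the crux". Your reductions are fine and in fact clean: granting the generic bound on a dense open set $\mathcal U$, writing an invertible $M$ as $A\cdot(A^{-1}M)$ with $A$ in the intersection of two dense open subsets of $\mathrm{GL}_n(\C)$ gives $2\lfloor n/2\rfloor+2$, and the rank normal form $M=A\,Z^{\,n-\rho}B$ with the Toeplitz matrix $Z^{\,n-\rho}$ gives $2(2\lfloor n/2\rfloor+2)+1=4\lfloor n/2\rfloor+5$, using subadditivity of $\Toep$ under products. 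But the generic statement --- dominance of $\Phi_r$ for $r=\lfloor n/2\rfloor+1$ --- is precisely the substantive content of Ye--Lim's theorem, and you do not establish it; you only give a dimension count and a plan for choosing a base point with surjective differential.

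Moreover, the key auxiliary claim in that plan is false as stated: for Toeplitz $T',T''$ the subspaces $\mathcal T\,T'$ and $T''\,\mathcal T$ are never in general position, since $T''T'$ lies in both (take $X=T''$, respectively $Y=T'$), so $\dim(\mathcal T\,T'+T''\,\mathcal T)\le 2(2n-1)-1$, not $2(2n-1)$. This is exactly the scaling redundancy $(T_i,T_{i+1})\mapsto(\lambda T_i,\lambda^{-1}T_{i+1})$ discussed in Section \ref{further} of the paper: it costs at least $r-1$ dimensions in the image of $d\Phi_r$, and after subtracting it the count $r(2n-1)-(r-1)\ge n^2$ holds with equality when $n$ is odd. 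So for odd $n$ there is no slack whatsoever, every intersection of consecutive slabs must be exactly one-dimensional and no further degeneracies may occur, and a "generic position" heuristic cannot close the argument; one needs an actual proof of surjectivity of the differential (or of dominance by some other route), which is the hard, non-constructive part of \cite{YeLim}. Your fallback suggestion (a Gauss-elimination-style scheme clearing two off-diagonals per Toeplitz factor) is likewise only a sketch. In short: the derivation of the second and third bounds from the first is correct, but the first and essential bound remains unproved in your proposal.
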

Here, genericity means that the exceptions are in an algebraic submanifold of dimension less than $n^2$. As the authors mention, no lower generic bound is possible. This follows from a simple dimension counting argument, and thus to some extent, the theorem means that Toeplitz matrices are well distributed for multiplication. The proof is non-constructive and it is our aim to focus on a more constructive approach. Of course, finding an explicit factorization is not as simple as proving its existence but this is a promising road for the future, nevertheless. 

It seems relevant to remark that the bound on $\Toep_n$ is clearly not exactly equal to its upper bound in all cases, since for $n=1$, every matrix is Toeplitz. The same applies for the bound for invertible matrices. For $n=2$, one can solve the problem of factorizing any matrix with two Toeplitz factors as an easy exercise. This means that the problem trully starts at $n=3$. Ye and Lim made the reasonable conjecture that their generic bound was true for any matrix, following the pattern observed for $n=1,2$ (see Conjecture 1 in \cite{YeLim}).

\begin{conj}[Ye-Lim]\label{conj1}
For all $n \in \N$, \[\Toep_n = \left\lfloor \frac{n}{2} \right\rfloor +1.\]
\end{conj}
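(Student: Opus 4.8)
\medskip

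\noindent\emph{Proof strategy.} The inequality $\Toep_n\ge\lfloor n/2\rfloor+1$ is forced by a parameter count, so the real content of the conjecture is the matching upper bound for \emph{every} matrix --- in effect, deleting the word ``generic'' from Theorem~\ref{YeLim1}. For the lower bound, recall that the $n\times n$ Toeplitz matrices form a linear space of dimension $2n-1$, so the multiplication map $\mu_s\colon(\C^{2n-1})^{s}\to\C^{n\times n}$, $(T_1,\dots,T_s)\mapsto T_1\cdots T_s$, has every fibre of dimension at least $s-1$, since one may rescale the factors by any $c_1,\dots,c_s$ with $\prod_i c_i=1$. Hence the image of $\mu_s$ has dimension at most $s(2n-1)-(s-1)=2s(n-1)+1$, and for this to reach $n^2=\dim\C^{n\times n}$ one needs $s\ge(n+1)/2$, i.e.\ $s\ge\lfloor n/2\rfloor+1$. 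As Theorem~\ref{YeLim1} already exhibits a Zariski-dense set of $M$ with $\Toep(M)\le\lfloor n/2\rfloor+1$, a generic $M$ achieves equality and $\Toep_n\ge\lfloor n/2\rfloor+1$; the remaining task is to show $\Toep(M)\le\lfloor n/2\rfloor+1$ for \emph{all} $M$, the cases $n\le 2$ being elementary as noted above.

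\medskip

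\noindent\textbf{Route A (constructive).} One would sharpen the argument behind the invertible bound $2\lfloor n/2\rfloor+2$ of \cite{YeLim}, which removes two dimensions at a time: one writes $M=TNT'$ with $T,T'$ Toeplitz and $N$ having its first two rows and columns equal to those of $I_n$ (imposing $4n-4$ conditions on the $4n-2$ Toeplitz parameters of $T$ and $T'$, a count that is just barely favourable), then recurses on the surviving $(n-2)\times(n-2)$ block of $N$. The obstacle is that the two outer factors $T,T'$, together with the cost of re-embedding the recursive factorization back into size $n$, contribute two Toeplitz matrices per step, whereas the target $\lfloor n/2\rfloor+1$ permits only one; closing this factor of two would require absorbing each outer factor into the recursive product, which fails because products of Toeplitz matrices are not Toeplitz and no smaller block structure survives either. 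The way forward I would try is to run the recursion not on Toeplitz matrices but on a larger class that is multiplicatively stable, contains both the Toeplitz matrices and the outer factors, and still has a favourable parameter count --- the natural candidate being the matrices of bounded \emph{displacement rank} --- and to check that the accounting then closes up with the sharp constant. Making such an inductive invariant work is, I expect, the principal difficulty on this route.

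\medskip

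\noindent\textbf{Route B (global).} Put $\mathcal T_s=\{M\in\C^{n\times n}:\Toep(M)\le s\}$ with $s=\lfloor n/2\rfloor+1$. By Chevalley's theorem $\mathcal T_s$ is constructible, by Theorem~\ref{YeLim1} it is Zariski dense, and it is a cone; so it would suffice to prove that $\mathcal T_s$ is \emph{closed}. One would projectivize each factor and study the natural map $(\mathbb P^{2n-2})^{s}\to\mathbb P(\C^{n\times n})$ defined wherever the product is nonzero, resolving it along the vanishing locus to get a proper family, and conclude that every boundary point of $\mathcal T_s$ is a limit of length-$s$ Toeplitz products. The difficulty is that along such a degeneration the Toeplitz factors can decouple --- some entries tending to $0$ and others to $\infty$ --- so that the limit matrix is no longer manifestly a short Toeplitz product, and one must then exhibit, stratum by stratum (the strata cut out by the rational canonical form, the derogatory and rank-deficient ones being the delicate cases), an independent length-$s$ factorization of each boundary matrix. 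This last step is, to my mind, the genuine crux of the conjecture --- and, as the rest of this paper shows, an obstruction that cannot be removed, already at $n=3$.
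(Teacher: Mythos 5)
The statement you are trying to prove is false, and the paper in question is devoted to refuting it: this is precisely the conjecture of Ye and Lim that the authors disprove at $n=3$. Your dimension count for the lower bound $\Toep_n\ge\lfloor n/2\rfloor+1$ is fine (and is the same observation the paper attributes to Ye--Lim), but the entire content lies in the upper bound for \emph{every} matrix, and that upper bound does not hold. Concretely, the matrix $M_3=\mathrm{diag}(1,2,3)$ admits no factorization $M_3=T_1T_2$ with $T_1,T_2$ Toeplitz: writing the nine entries of $T_1T_2-M_3$ as quadratic polynomials $p_1,\dots,p_9$ in the ten Toeplitz parameters, the paper exhibits explicit multipliers $q_1,\dots,q_9$ with $\sum_i q_ip_i=e(d-e)(f-d)(f-e)$ for a general diagonal matrix $\mathrm{diag}(d,e,f)$, which is a Nullstellensatz certificate of infeasibility whenever $d\neq e\neq f\neq d$ and $e\neq0$. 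Hence $\Toep(M_3)=3>2=\lfloor 3/2\rfloor+1$, and in fact $3\le\Toep_3\le4$.

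This tells you exactly where your two routes break. In Route B, $\mathcal T_s$ with $s=\lfloor n/2\rfloor+1$ is indeed constructible, conical and Zariski dense, but it is \emph{not} closed and, more to the point, density does not force it to be all of $\C^{n\times n}$: for $n=3$ its complement contains a positive-dimensional family (generic diagonal matrices, among others), so no resolution/properness argument can repair the boundary strata --- they genuinely lie outside $\mathcal T_2$. In Route A, the factor-of-two loss you identify cannot be closed, because the sharp constant is unattainable. You half-acknowledge this in your final sentence; the correct conclusion is not that the crux is hard but that the conjecture is false, and the honest deliverable here would be a counterexample (such as the certificate above) rather than a strategy for a proof. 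If you want a salvageable positive statement, aim instead at the bounds the paper actually proves ($3\le\Toep_3\le4$, $3\le\Toep_4\le9$) or at its amended conjecture $\Toep_n=\lceil n/2\rceil+1$.
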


In the present article we will show that this conjecture fails for $n=3$. In fact, it fails for the simplest non-Toeplitz matrix imaginable: the invertible matrix $M_3$ below, can't be decomposed as $M_3=T_1 T_2$ with $T_i$ Toeplitz:
\[M_3 := \begin{pmatrix}
1 & 0 & 0 \\ 0 & 2 & 0 \\ 0 & 0 & 3
\end{pmatrix}\]
This may discourage the reader to deal with this problem any further but the factorization of $M_3$ is not complicated at all if one allows for 3 factors. Indeed any diagonal matrix can be written as \begin{equation}\label{eqn300}\begin{pmatrix}
0 & 0 & d \\ 1 & 0 & 0 \\ 0 & 1 & 0 \end{pmatrix} \cdot \begin{pmatrix}
0 & 0 & e \\ 1 & 0 & 0 \\ 0 & 1 & 0 \end{pmatrix} \cdot \begin{pmatrix}
0 & 0 & f \\ 1 & 0 & 0 \\ 0 & 1 & 0 \end{pmatrix} = \begin{pmatrix}
d & 0 & 0 \\ 0 & e & 0 \\ 0 & 0 & f \end{pmatrix}.\end{equation}

Our results will include a description of several families of counterexamples to the conjecture for $n=3$ as well as improvements on the bounds for $n \leq 4$ based on rank observations (some of which are valid for larger values of $n$). We will also give a complete description of all the diagonal (and all the antisymmetric) matrices that contradict the conjecture for $n=3$, and show that for fixed $n \in \N$, the problem of finding the maximal Toeplitz number of order $n$ is solvable algorithmically by using Buchberger's algorithm. In our own opinion, a relevant finding here is that among diagonal matrices for $n=3$, counterexamples are generic, while among antisymmetric ones, the counterexamples are exceptional. It is natural to ask what is the solution for lower values of $n$ but the finiteness of the algorithm is only theoretical, since the problem of factorizing a $4 \times 4$ matrix as a product of 3 Toeplitz matrices of the same size seems already untractable. We will however describe much of the behavior of $3 \times 3$ matrices, and show that \begin{equation}\label{eqnToep3}3 \leq \Toep_3 \leq 4.\end{equation}
For matrices of $4\times 4$ size we will be able to show
\begin{equation}\label{eqnToep4}3 \leq \Toep_4 \leq 9.\end{equation}

We consider our approach to be more explicit than that of Ye and Lim, and we provide explicit factorizations of several families of matrices that exploit particular matrix structures.

Our plan is as follows: In Section \ref{opasandtoep} below, we provide insight on the operator theory motivation, behind our study of some particular matrices. Again $M_3$ is the simplest appearing matrix in the relevant class of matrices in that context. Then we proceed, in Section \ref{counter}, to show counterexamples to the conjecture by Ye and Lim. This will include a characterization of which diagonal  $3 \times 3$ matrices yield no factorization with 2 Toeplitz factors. We will comment on antisymmetric matrices, which can also be dealt with in reasonable time completely, to find which ones furnish counterexamples. We continue, in Section \ref{rankstuff}, with a few results lowering some of the upper bounds given by Ye and Lim for the number of needed Toeplitz factors in specific cases in terms of the rank. We make use of these observations to establish \eqref{eqnToep3} and \eqref{eqnToep4}. We conclude in Section \ref{further} with some remarks about questions that remain open.

\section{Toeplitz operators and optimal approximation in function spaces}\label{opasandtoep}

Part of our motivation to study the matrix $M_3$ above came from one of the applications of Toeplitz matrices in operator theory, where they represent the finite truncations of Toeplitz operators (projections composed with multiplication operators in spaces of analytic functions). Our initial observation is that Toeplitz matrices appear in the study of cyclic functions of certain space: Let $H$ be a Hilbert space of holomorphic functions over the unit disk $\D$. We say $f \in H$ is \emph{cyclic} if $\mathcal{P}f$ is dense in $H$. In many such spaces, determining whether a function $f$ is cyclic reduces to finding the assymptotic behavior of $1-p_n(0)f(0)$, where $p_n$ is the \emph{optimal polynomial approximant} (or \emph{opa}) to $1/f$ of degree $n \in \N$, that is, the unique polynomial of degree at most $n$ minimizing the norm \[\|1-pf\|_H\]
among all polynomials $p$ of degree at most $n$. 
It turns out (see \cite{BCLSS1, FMS1}) that the coefficients of $p_n$ are given by the solution $c$ to a linear system \[Mc=b,\] where $M$ is the matrix with entries \begin{equation}\label{eqn101} M_{j,k} = \left \langle z^k f, z^j f \right \rangle_H, \qquad j,k=0,\ldots,n,\end{equation}
and $b$ is the vector $\overline{f(0)}e_0$ ($e_j$ being the $j$-th element of the canonical basis in $\C^{n+1}$ for $j=0,\ldots, n$). Thus, algorithms for solving this linear system that exploit efficiently the structure of the matrices obtained for a particular space $H$ are highly sought after. This structure is linked to the behavior of the \emph{shift} operator $S$ taking a function $f$ to $Sf(z) = zf(z)$. Its relevance is due to its universality for certain classes of operators through spectral theorems.

A satisfactory solution to the above system has only been possible until now in the case of one particular space $H$: the Hardy space $H^2$ is the space formed by holomorphic functions in $\D$ which have a Maclaurin series with coefficients in $\ell^2$ (equipped with the corresponding $\ell^2$ norm). It can be interpreted as well in terms of the boundary values of the function belonging to $L^2(\T)$. It is one of the most studied objects in the complex analysis of the last century. This is in part thanks to a Theorem of Beurling that provides a complete understanding of the invariant subspaces for the shift operator acting there (and as a consequence, obtaining a complete characterization of cyclic functions). Invariant subspaces are, in this context, directly related with so-called inner functions, while cyclic functions are those called outer. Every $f \in H^2$ can be factorized as $f=IE$ where $I$ is inner, $E$ is outer and this provides all information about which invariant subspaces contain the function $f$. See \cite{Gar} for more on the basics of $H^2$. 

The space $H^2$ is uniquely special in that the shift operator is an isometry, and this means that the matrix $M$ with entries \eqref{eqn101} satisfies \eqref{eqn100}, i.e., $M$ is Toeplitz. An \emph{opa based} description of cyclicity in $H^2$ (coherent with Beurling's Theorem) was achieved in \cite{JLMS1}. From now on, we denote by $Z(g)$ the zero set of a polynomial function $g$. There it was shown that a function $f$ is cyclic if and only if 
\[\prod_{n=1}^\infty \prod_{z_k \in Z(p_n)} |z_k|^{-2} = \frac{|f(0)|^2}{\|f\|^2}.\]
In terms of the inner-outer factorization of $f=IE$, the right-hand side is maximized when the inner part is trivial, while the left hand-side does not depend on the inner part. The relevant feature here is that the zeros of opa (together with the quotient $|f(0)|/\|f\|$) determine the cyclic character of the function. The proof of this result was based on a recursive scheme that exploited the Toeplitz character of the matrices, by making a systematic use of Levinson's algorithm. 
Performing a similar recursion scheme in other spaces of analytic functions seems like a natural goal. We are going to focus now on the \emph{Dirichlet space} $D$: Let $f \in Hol(\D)$, and $f(z) = \sum_{k\in \N} a_k z^k$. Then $f$ is an element of $D$ if it satisfies \begin{equation}\label{eqn102}
\|f\|_D^2 := \sum_{k=0}^\infty |a_k|^2 (k+1) < \infty.
\end{equation} 
This induces a natural inner product making $D$ into a Hilbert space where it makes sense to study cyclic functions. In fact, a conjecture by Brown and Shields \cite{BS84} proposes a particular characterization of cyclic functions there, and several experts consider this a very relevant problem in the field, see \cite{EFKMR}. The opa approach was introduced with the long term goal of studying the cyclic functions in the Dirichlet space, and many tools from approximation theory could be used to study the conjecture if we had an available characterization of cyclic functions in terms of the zeros of opas, like the one for $H^2$.
When studying opas in $D$, there are two good reasons to expect some nice results: from the definition of the norm \eqref{eqn102} it is easy to see that the shift in $D$ is a 2-isometry, that is \[S^2f -2 Sf +f \equiv 0.\]
In terms of the matrices \eqref{eqn101} this means that 
\[M_{i,j}-M_{i+1,j+1}=M_{i+1,j+1}-M_{i+2,j+2}.\] In other words, rather than constant along each diagonal, these matrices vary at constant speed along each diagonal. We call such matrices, \emph{Dirichlet-Toeplitz} matrices (DTM). The simplest function one could test for its cyclicity is the constant 1 and this would provide a diagonal matrix $M_{n+1}$ with entries $1...n+1$.
The matrix $M_3$ obtained in the introduction is the result of taking $f=1$ and $n=2$. In that regard, it appears as one of the simplest DTM.  If one could describe any DTM $M \in \C^{n \times n}$ as a product of $k$  (independent of $n$) Toeplitz matrices, then one could hope describing cyclicity in $D$, by applying Levinson algorithm (as in $H^2$) $k$ times. For that matter, decomposing $M_n$ for each $n$ is particularly relevant. By simply counting dimensions, one could have hoped for the special class of DTM to be decomposed in as few as 2 Toeplitz factors. Being able to perform such a decomposition explicitly would have pointed towards a double application of Levinson as a solution to the problem of finding an opa based description of cyclicity in $D$. These and other questions were presented in the survey \cite{Seco2}. As per an idea in the introduction, any diagonal matrix can trivially be decomposed as the product of $n$ Toeplitz matrices. Even though we shall disprove such a simple decomposition with 2 factors, we still consider that decomposing $M_n$ explicitly as a product of Toeplitz matrices with $\Toep(M_n) (\leq n)$ factors is an interesting problem for larger values of $n$.  

\section{Counterexamples to the conjecture}\label{counter}

A simple and key observation is that given an $n \times n$ matrix $A$ and a value $s \in \mathbb Z^+$, determining whether $A$ admits a Toeplitz decomposition of length $s$ is equivalent to the problem of deciding if a particular system  of multivariate polynomial equations has complex solutions which, by Hilbert's Nullstellensatz (see, e.g., \cite[Chapter 4]{CLO}), amounts to determining whether $1$ belongs to  an ideal or not. More precisely:

\begin{prop}\label{pr:ecuacion} Let $A = (a_{i,j}) \in \mathcal M_{n \times n}(\C)$ be an $n \times n$ matrix and $s \geq 1$. Consider the set of $(2n-1)s$ variables $X = \{ x_{k,\ell} \, \vert, 1 \leq k \leq s,\, -(n-1) \leq \ell \leq n-1\}$, and the $n^2$ homogeneous polynomials of degree $s$ 
\[f_{i,j} :=  \sum_{1 \leq l_1, \ldots, l_{s-1} \leq n} x_{1, l_1-i}\, x_{2, l_2-l_1}\, \cdots\, x_{s,j-l_{s-1}} \in \C[X], \text{ for } i,j \in \{1,\ldots,n\}.  \]
The following conditions are equivalent:
\begin{itemize} 
\item The matrix $A \in \mathcal M_{n \times n}(\C)$ is a product of $s$ Toeplitz matrices.
\item The system of equations $f_{i,j} = a_{i,j}$ for all $i,j \in \{1,\ldots,n\}$ has a solution in $\C^{(2n-1)s}$.  
\item The ideal $I = \langle f_{i,j} - a_{i,j} \vert \, 1 \leq i,j \leq n \rangle \subseteq \C[X]$ satisfies that  $1 \notin I$.
\end{itemize}
\end{prop}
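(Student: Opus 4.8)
The plan is to unwind the definition of a product of $s$ Toeplitz matrices entrywise, and observe that the resulting polynomial identities are exactly the equations $f_{i,j} = a_{i,j}$; the equivalence with $1 \notin I$ is then just Hilbert's Nullstellensatz. First I would parametrize a general Toeplitz matrix $T_k \in \C^{n\times n}$ by its $2n-1$ diagonal values: write $(T_k)_{p,q} = x_{k,\,q-p}$ for $-(n-1) \le q-p \le n-1$, so that the indexing set for $\ell$ in the variable set $X$ is precisely the set of possible diagonal offsets. This is the natural bijection between Toeplitz matrices and vectors in $\C^{2n-1}$, and it makes the Toeplitz constraint \eqref{eqn100} automatic.

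Next I would compute the $(i,j)$ entry of the product $T_1 T_2 \cdots T_s$ by iterating matrix multiplication: inserting intermediate summation indices $l_1, \dots, l_{s-1}$ running over $\{1,\dots,n\}$, one gets
\[
(T_1 \cdots T_s)_{i,j} = \sum_{1 \le l_1,\ldots,l_{s-1} \le n} (T_1)_{i,l_1} (T_2)_{l_1,l_2} \cdots (T_s)_{l_{s-1},j} = \sum_{1 \le l_1,\ldots,l_{s-1} \le n} x_{1,l_1-i}\, x_{2,l_2-l_1} \cdots x_{s, j-l_{s-1}},
\]
which is exactly $f_{i,j}$. Note each summand is a product of $s$ variables, so $f_{i,j}$ is homogeneous of degree $s$ as claimed, and the offsets $l_1 - i,\, l_2 - l_1,\, \dots,\, j - l_{s-1}$ all lie in $\{-(n-1),\dots,n-1\}$ since all indices are in $\{1,\dots,n\}$, so $f_{i,j} \in \C[X]$ is well-defined. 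This gives the equivalence of the first two bullet points: $A$ is a product of $s$ Toeplitz matrices if and only if there is a choice of diagonal values $(x_{k,\ell})$ — i.e., a point of $\C^{(2n-1)s}$ — satisfying $f_{i,j} = a_{i,j}$ for all $i,j$.

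Finally, the equivalence of the second and third bullet points is the weak Nullstellensatz over the algebraically closed field $\C$: the system $\{f_{i,j} - a_{i,j} = 0\}$ has no solution in $\C^{(2n-1)s}$ if and only if $1 \in I = \langle f_{i,j} - a_{i,j} \mid 1 \le i,j \le n\rangle$; equivalently, it has a solution if and only if $1 \notin I$. I would cite \cite[Chapter 4]{CLO} for this. There is no real obstacle here — the only point requiring a little care is bookkeeping the index ranges to confirm that every variable appearing in $f_{i,j}$ is genuinely among the declared $(2n-1)s$ variables in $X$, and that the map (Toeplitz matrix) $\leftrightarrow$ (vector of diagonals) is a bijection so that solutions of the polynomial system correspond bijectively to factorizations; both are routine.
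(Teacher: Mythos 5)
Your proposal is correct and follows exactly the argument the paper intends (and only sketches, since it treats the proposition as routine): parametrize each Toeplitz factor by its $2n-1$ diagonal values, expand the product entrywise to see that the $(i,j)$ entry is $f_{i,j}$, and apply the weak Nullstellensatz over $\C$ as in \cite[Chapter 4]{CLO} for the equivalence with $1\notin I$. Nothing further is needed.
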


This result might suggest Gr\"obner basis techniques to compute Toeplitz ranks of matrices. However, due to the size of the problem, this approach seems to be useless even for quite small values of $n$ and $s$ ($n \geq 4$ and $s \geq 3$).  This did not deter us from trying to understand a few particular cases with this approach, but we restricted ourselves to small classes of small matrices.

In order to prove that certain matrix $M \in \C^{3 \times 3}$ is a counterexample to Conjecture \ref{conj1} when $n=3$ we need to show that the problem of finding Toeplitz matrices $T_1$ and $T_2$ such that $M=T_1T_2$ has no solution. Denote \[T_1 = \begin{pmatrix} a_2 & a_3 & a_4 \\ a_1 & a_2 & a_3 \\ a_0 & a_1 & a_2 \end{pmatrix}; T_2 = \begin{pmatrix} b_2 & b_3 & b_4 \\ b_1 & b_2 & b_3 \\ b_0 & b_1 & b_2  \end{pmatrix}; T_1T_2 -M= \begin{pmatrix} p_1 & p_2 & p_3 \\ p_4 & p_5 & p_6 \\ p_7 & p_8 & p_9 \end{pmatrix}.\] If we express the system $M=T_1T_2$ as a system of algebraic equations on the entries of $T_1$ and $T_2$ we are going to prove that there are no simultaneous solutions to all polynomials $p_1,\ldots, p_9$. In terms of Hilbert's Nullstellensatz, our problem is then equivalent to establishing the existence of polynomials $q_1,\ldots, q_9$ such that \begin{equation}\label{eqn200}1= \sum_{i=1}^{9} q_i p_i.\end{equation} Here $p_i$ will be each of the 9 entries of the product $T_1$, $T_2$, which will form a polynomial of degree 2 on the entries $a_0,\ldots, a_4; b_0,\ldots, b_4$. The polynomials $p_1,\ldots, p_9$ are given by
\[p_1 = a_2 b_2 + a_3 b_1 + a_4 b_0 -M_{1,1}, \qquad p_2 = a_2 b_3 + a_3 b_2 + a_4 b_1 -M_{1,2}\]
\[p_3 = a_2 b_4 + a_3 b_3 + a_4 b_2 -M_{1,3}, \qquad p_4 = a_1 b_2 + a_2 b_1 + a_3 b_0 -M_{2,1}\]
\[p_5 = a_1 b_3 + a_2 b_2 + a_3 b_1-M_{2,2}, \qquad p_6 = a_1 b_4 + a_2 b_3 + a_3 b_2 -M_{2,3}\]
\[p_7 = a_0 b_2 + a_1 b_1 + a_2 b_0 -M_{3,1}, \qquad p_8 = a_0 b_3 + a_1 b_2 + a_2 b_1 -M_{3,2}\]
\[p_9 = a_0 b_4 + a_1 b_3 + a_2 b_2 -M_{3,3}.\]
A proof that $M$ is a counterexample may thus just consist of a choice of polynomials $q_1,...,q_9$ such that  \eqref{eqn200} holds. By finding a Gr\"{o}bner basis for $I(p_1,\ldots,p_9)$ we will be able to determine whether there is a solution and therefore, whether $M$ is factorizable as the product of two Toeplitz matrices. 
The core of our message in this article is that for a given matrix, \emph{there is an algorithm} (Buchberger's) that determines whether or not a particular square $M$ may be written as a Toeplitz product $\prod_{j=1}^d  T_j$ for a particular $d \in \N$. One could leave a few elements of $M$ to be variables and the algorithm will still work. However, letting all 9 entries to be variables seems to generate already too large of a problem for our current computational power.  The algorithm we use is not fast and its complexity increases extremely fast on the dimension $n$ of $M \in \C^{n \times n}$. This is why we decided to focus on a few families of matrices and finding the counterexamples within such class.
\subsection{Diagonal matrices.} For diagonal matrices we can describe completely the value $\Toep(M)$. Remind that $\Toep(M)=1$ if and only if the matrix is Toeplitz, which for diagonal matrices only happens for multiples of the identity. Also, \eqref{eqn300} shows how to decompose any $3 \times 3$ diagonal matrix as the product of 3 factors (or, for that matter, any $n \times n$ diagonal matrix as the product of $n$ factors). Therefore, the question for these diagonal matrices is whether $\Toep(M)=2$ or $\Toep(M)=3$:
\begin{thm}\label{thmdiag}
Let $M= \begin{pmatrix}d & 0 & 0 \\ 0 & e & 0 \\ 0 & 0 & f\end{pmatrix}$. Then $\Toep(M)=3$ if and only if $d \neq e \neq f \neq d$ and $e \neq 0$.
\end{thm}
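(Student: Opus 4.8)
The plan is to treat the two directions separately. For the easy direction, suppose $d,e,f$ are not pairwise distinct or $e=0$; I want to exhibit an explicit factorization $M=T_1T_2$ with $T_1,T_2$ Toeplitz. If two of the three diagonal entries coincide, say by symmetry $d=e$, then $M=\mathrm{diag}(d,d,f)$; writing $M = D\cdot P$ where $P$ is a permutation-type Toeplitz matrix and $D$ is... actually the cleanest route is to note $\mathrm{diag}(d,d,f) = \bigl(\begin{smallmatrix} d&0&0\\0&d&0\\0&0&d\end{smallmatrix}\bigr) + (f-d)E_{33}$ and look for a direct $2$-factor form. Concretely I would try $T_1 = \bigl(\begin{smallmatrix} a_2&a_3&a_4\\a_1&a_2&a_3\\a_0&a_1&a_2\end{smallmatrix}\bigr)$, $T_2$ similarly, and solve the nine equations $p_1=\dots=p_9=0$ from the excerpt by hand, exploiting the extra freedom the coincidence or the vanishing of $e$ gives. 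For instance when $e=0$ one can look for a solution with $a_2=b_2=0$, which kills the diagonal of each factor and collapses many equations; when $d=e$ one can look for solutions where $T_1$ is (a scalar times) a circulant. In each of the degenerate cases I expect a one- or two-parameter family of honest Toeplitz factorizations, which I would simply write down and verify by multiplication. The multiples-of-the-identity subcase where all three coincide is already covered since then $M$ is Toeplitz, i.e. $\Toep(M)=1<3$, and the statement only claims the value $3$ under the stated hypotheses, so I should be a little careful to phrase the biconditional as: $\Toep(M)=3$ iff ($d,e,f$ pairwise distinct and $e\neq 0$), and separately observe that in all other cases $\Toep(M)\le 2$ (indeed $\in\{1,2\}$), which together with the already-known bound $\Toep(M)\le 3$ from \eqref{eqn300} gives the equivalence.

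For the hard direction — $d,e,f$ pairwise distinct and $e\neq 0$ implies $M$ is not a product of two Toeplitz matrices — I would invoke Proposition \ref{pr:ecuacion} and the Nullstellensatz reformulation \eqref{eqn200}: it suffices to show $1\in I(p_1,\dots,p_9)$ in $\C[a_0,\dots,a_4,b_0,\dots,b_4]$, where now $M_{1,1}=d$, $M_{2,2}=e$, $M_{3,3}=f$ and all off-diagonal $M_{i,j}=0$. Rather than running Buchberger blindly, I would first do the elementary elimination by hand. The six off-diagonal equations $p_2,p_3,p_4,p_6,p_7,p_8=0$ together with $p_1,p_5,p_9$ being equal to $d,e,f$ give a fairly rigid system; a natural first step is to combine them to force structure on the $a_i,b_i$. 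For example, $p_5 - \frac{?}{?}$ ... more usefully, note $p_1+p_5+p_9$ and the anti-diagonal combinations; and crucially $p_2,p_4$ (resp. $p_6,p_8$) relate the "upper" and "lower" parts. I expect that eliminating variables shows any solution must have, say, $a_1=a_3=b_1=b_3=0$ or some similar sparsity (because the off-diagonal entries of $T_1T_2$ vanish), reducing to $T_1,T_2$ supported on three alternating diagonals; then the three remaining diagonal equations become a small polynomial system in $a_0,a_2,a_4,b_0,b_2,b_4$ whose only obstruction to solvability is exactly the condition forcing two of $d,e,f$ equal (or $e=0$). The punchline I am aiming for is a clean polynomial identity of the form
\[
1 = \sum_{i=1}^{9} q_i\, p_i
\]
valid whenever $(d-e)(e-f)(f-d)e \neq 0$ is used to divide; equivalently, $(d-e)(e-f)(f-d)e$ lies in the ideal generated by the $p_i$ when the $M_{i,j}$ are kept symbolic, so that after substituting our pairwise-distinct, $e\neq0$ values we get $1\in I$.

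The main obstacle is producing — and presenting compactly — the explicit certificate $q_1,\dots,q_9$, or equivalently carrying the by-hand elimination far enough to reach a contradiction without the argument degenerating into an opaque Gröbner-basis dump. I anticipate two tricky points: (i) correctly organizing the case analysis so the single statement "$d,e,f$ pairwise distinct and $e\neq 0$" emerges as precisely the non-solvability locus, rather than a messier union of conditions — this requires checking that $e=0$ but $d,e,f$ otherwise distinct does admit a factorization, and that $e\neq0$ with exactly one coincidence among $d,e,f$ also does; (ii) guessing the right sparsity ansatz for $T_1,T_2$ in the degenerate cases so the verification is short. I would handle (i) by explicitly constructing factorizations in the $e=0$ case (using $a_2=b_2=0$ as above, which makes $p_1,p_5,p_9$ read $a_3b_1, a_1b_3, a_1b_3$ — wait, that already forces $p_5=p_9$, so actually the natural ansatz there is different) and in the one-coincidence case, and handle (ii) by symmetry considerations (cyclic shifts of the three diagonals). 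The genuinely new content, and where I'd expect to spend real effort, is the Nullstellensatz certificate for the non-factorizable case; everything else is bookkeeping and direct verification.
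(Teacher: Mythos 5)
Your outline matches the paper's strategy exactly (explicit two-factor factorizations in the degenerate cases; a Nullstellensatz certificate showing $e(d-e)(e-f)(f-d)\in\langle p_1,\dots,p_9\rangle$ with $d,e,f$ symbolic, so that $1\in I$ whenever this product is nonzero), but the proposal stops short of the actual content in both halves. The hard direction \emph{is} the certificate: the paper's proof consists of writing down nine explicit polynomials $q_1,\dots,q_9$ with $\sum_i q_ip_i=e(d-e)(f-d)(f-e)$, and the easy direction consists of four concrete factorizations (covering $d=e$, $e=f$, $e=0$, and $d=f$ with $e\neq0$, the last requiring entries like $(e-d)/e$). Saying you ``would'' produce these, and that you ``expect'' the elimination to work out, is a plan rather than a proof; nothing in the write-up certifies that the non-solvability locus is exactly $\{e(d-e)(e-f)(f-d)=0\}$, which is the whole point of the theorem.

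Moreover, the one concrete route you sketch toward the contradiction would fail as justified. You claim the vanishing of the off-diagonal entries of $T_1T_2$ should force a sparsity such as $a_1=a_3=b_1=b_3=0$, reducing to a system in $a_0,a_2,a_4,b_0,b_2,b_4$. The off-diagonal equations alone do not imply any such support condition: for example
\[
\begin{pmatrix}0&d&0\\0&0&d\\f&0&0\end{pmatrix}\begin{pmatrix}0&0&1\\1&0&0\\0&1&0\end{pmatrix}=\begin{pmatrix}d&0&0\\0&d&0\\0&0&f\end{pmatrix}
\]
solves all six off-diagonal equations with $a_3=d\neq0$ and $b_1=1\neq0$ (it is precisely one of the paper's degenerate-case factorizations). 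So any genuine elimination must use the three diagonal equations together with the hypotheses on $d,e,f$, and that is exactly the step you have not supplied; the paper bypasses it by exhibiting the $q_i$ directly (found via Gr\"obner bases). As a minor point, your $e=0$ ansatz $a_2=b_2=0$ does in fact lead to a valid factorization (take $T_1$ with corner entries $d$ and $f$ only, $T_2$ with corner entries $1$), but your reading of $p_5,p_9$ under that ansatz is off ($p_5$ becomes $a_1b_3+a_3b_1-e$ and $p_9$ becomes $a_0b_4+a_1b_3-f$), and the degenerate factorizations, especially for $d=f\neq e$, still need to be written out and verified for the ``only if'' direction to be complete.
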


\begin{proof}

We make the choice of polynomials \[q_1 = (f-e)[a_2b_2(d-e) + a_3b_1(d-f)], \qquad q_2 = a_3b_0(d-f)(e-f), \qquad q_3 = a_2b_0(d-e)(e-f), \] \[q_4 = a_1b_4(d-f)(d-e) - a_2b_3(d-e)(e-f) + a_3b_2(d-f)(d+f-2e)\]\[q_5 = (d-f)[a_1b_3(e-d) + a_3b_1(e-f) +(f-e)(d-e)], \quad q_6 = (d-e)[a_2b_1(e -f) + a_3b_0(f-d)], \]
\[q_7 = a_2b_4(d-e)(f-e) + a_3b_3(d-f)(d+f-2e), \quad q_8 = (f-d)[a_1b_4(d-e) + a_3b_2(d+f-2e)],\] \[q_9 = (d-e)[a_1b_3(d-f)+ a_2b_2(e-f)].\]

In this way, we obtain 
\[\sum_{i=1}^9 q_i p_i = e(d-e)(f-d)(f-e).\]

In particular, if each of the factors on the right-hand side is different from 0, dividing by their product all $q_i$ values, we obtain that $Toep(M)=3$. In the remaining cases, $M$ must admit one of the following factorizations:
\[\begin{pmatrix}0 & d & 0 \\ 0 & 0 & d \\ f & 0 & 0\end{pmatrix} \cdot \begin{pmatrix}0 & 0 & 1 \\ 1 & 0 & 0 \\ 0 & 1 & 0\end{pmatrix} ; \quad  \begin{pmatrix}0 & 0 & d \\ 0 & 0 & 0 \\ f & 0 & 0\end{pmatrix} \cdot \begin{pmatrix}0 & 0 & 1 \\ 0 & 0 & 0 \\ 1 & 0 & 0\end{pmatrix}; \quad \begin{pmatrix}0 & 1 & 0 \\ 0 & 0 & 1 \\ 1 & 0 & 0\end{pmatrix} \cdot \begin{pmatrix}0 & 0 & f \\ d & 0 & 0 \\ 0 & d & 0\end{pmatrix};\] or, in the case $d=f$,  \[\begin{pmatrix}1 & 0 & 1 \\ 0 & 1 & 0 \\ \frac{e-d}{e} & 0 & 1 \end{pmatrix} \cdot \begin{pmatrix} e & 0 & -e \\ 0 & e & 0 \\ d-e & 0 & e\end{pmatrix}.\] 
\end{proof}

\subsection{Antisymmetric matrices.} We decided to include one further subclass of $\C^{3 \times 3}$ because it illustrates something specific: among diagonal matrices, Theorem \ref{thmdiag} tells us that counterexamples to Conjecture \ref{conj1} are generic. In other words, the exceptional set for the conjecture when $n=3$ has algebraic dimension at least $3$. 
In contrast, antisymmetric matrices, which do not intersect the (nonzero) diagonal matrices, display the opposite behavior—consistent with that of general matrices: generically, they satisfy the conjecture. 

However, within the $3$-dimensional space of antisymmetric $3\times 3$ matrices, there exists a $2$-dimensional subvariety consisting of counterexamples. Rather than presenting a full proof, we briefly mention that applying Buchberger’s algorithm to a general antisymmetric matrix of the form $$M= \begin{pmatrix} 0 & d & e \\ -d & 0 & f \\ -e & -f & 0\end{pmatrix}$$ 
provides a polynomial of degree 28 in the variables $d$, $e$ and $f$, such that the Ye-Lim conjecture holds outside its zero set. 

This polynomial admits a factorization into irreducible components of degrees $1$, $2$, and $4$. By analyzing all possible combinations of these factors, a complete classification of the counterexamples in this class can be obtained within reasonable computational effort.
Instead of presenting the full classification, we highlight two representative examples of counterexamples: any generic point on the surface defined by $df-e^2=0$ and any nonzero point on the line defined by $(d,e,f)=\lambda (-1,1,\sqrt{3}i)$.
We emphasize this case because computationally tractable instances of the conjecture are rare, and the antisymmetric case offers a particularly instructive and analyzable example.

\section{Rank and Toeplitz numbers}\label{rankstuff}

For low rank matrices one can devise explicit algorithms for finding Toeplitz  decompositions.

\begin{prop} 
\label{rank:1}
Let $M \in \mathcal M_{n \times n}(\C)$ of rank $1$, then $M = T_1 T_2 T_3,$ where $T_1, T_2 \hbox{ and } T_3$ are three Toeplitz matrices.
\end{prop}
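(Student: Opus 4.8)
The plan is to write any rank-$1$ matrix $M$ as $M = u v^{T}$ for column vectors $u, v \in \C^{n}$, and then to factor each of the two "elementary" pieces — the one built from $u$ on the left and the one built from $v^{T}$ on the right — as a product of Toeplitz matrices, arranging the total count to be three. The natural building blocks are the lower-shift-type Toeplitz matrices already used in the introduction in \eqref{eqn300}; concretely, I would look for a Toeplitz matrix $T$ whose first column is $u$ (for instance an upper triangular Toeplitz matrix with $u$ down its first column, or a circulant-like shift), and similarly a Toeplitz matrix whose last row (or first row) is $v^{T}$.

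**Key steps, in order.** First, reduce to $M = u v^{T}$ and dispose of the trivial cases ($u = 0$ or $v = 0$, where $M = 0$ is itself Toeplitz). Second, observe that the $n \times n$ matrix $E_{1,1}$ (the matrix with a single $1$ in the top-left corner and zeros elsewhere) satisfies $u v^{T} = A\, E_{1,1}\, B$ where $A$ is any matrix with first column $u$ and $B$ is any matrix with first row $v^{T}$; choose $A$ and $B$ to be Toeplitz (e.g. take $A$ upper triangular Toeplitz with symbol having $u$ as coefficients, and $B$ lower triangular Toeplitz with $v$ along its first row). Third, note that $E_{1,1}$ is \emph{not} Toeplitz for $n \ge 2$, so this is only a product of three matrices, one of which is not yet Toeplitz; the remaining task is to absorb $E_{1,1}$ into an adjacent factor, or else to write $E_{1,1}$ itself as a product of Toeplitz matrices in a way that does not inflate the count. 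Here I would use that $E_{1,1} = C D$ for suitable Toeplitz $C, D$ only if I am careful — $E_{1,1}$ has rank $1$ and a quick dimension check suggests it \emph{cannot} be a single Toeplitz matrix but might be a product of two — and then combine one of these with $A$ or $B$. The cleanest route is probably to pick $A$ to be a \emph{nonsingular} Toeplitz matrix with first column $u$ padded appropriately when $u$ has a nonzero entry somewhere; if $u_k \ne 0$ one can use a cyclic shift to move the support, reducing to the case $u_1 \ne 0$, and similarly for $v$, after which $E_{1,1}$ can be merged. Fourth, assemble: $M = T_1 T_2 T_3$ with all three Toeplitz, handling by hand the degenerate configurations (e.g. $u$ or $v$ a standard basis vector $e_j$) where some shift matrices become singular.

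**Main obstacle.** The hard part will be the bookkeeping of the degenerate cases: when $u$ or $v$ is sparse (supported on few coordinates, or a single coordinate), the obvious Toeplitz completions of $u$ and $v^{T}$ become singular or fail to telescope, and the "absorb $E_{1,1}$" trick has to be replaced by an ad hoc triple of Toeplitz matrices tailored to the support pattern. I expect one needs a small case analysis according to which coordinates of $u$ and of $v$ are nonzero (perhaps reduced, by conjugating with cyclic permutation matrices — which are themselves Toeplitz and invertible — to the case where both $u$ and $v$ have a nonzero first entry). Once that normalization is in place, the three explicit Toeplitz factors should be writable down directly, in the spirit of \eqref{eqn300}, and the verification that their product equals $u v^{T}$ is a routine computation that I would not carry out in detail here.
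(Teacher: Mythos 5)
Your overall strategy (write $M=uv^{T}$ and sandwich a rank-one ``selector'' matrix between a Toeplitz factor built from $u$ and one built from $v^{T}$) is the right one, but the proposal stalls exactly at the point where the actual proof has its single key idea, and the repair you sketch would not go through as described. You centre the construction on $E_{1,1}=e_1e_1^{T}$, correctly note it is not Toeplitz for $n\ge 2$, and then propose to ``absorb'' it into an adjacent factor after normalising $u,v$ by cyclic shifts, with an ad hoc case analysis for sparse $u,v$. None of this is carried out, and the natural merges fail: $A\,E_{1,1}$ is the matrix whose first column is $u$ and whose remaining columns vanish, and $E_{1,1}B$ is the matrix whose first row is $v^{T}$ and whose remaining rows vanish; by \eqref{eqn100} neither is Toeplitz unless $u$, respectively $v$, is a multiple of $e_n$. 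Writing $E_{1,1}$ as a product of two Toeplitz matrices (e.g.\ $E_{1,1}=(e_1e_n^{T})(e_ne_1^{T})$) gives four factors, and the re-merge you would then need (a product such as $(e_ne_1^{T})B$, which is supported on a single row) is again non-Toeplitz in general; inserting cyclic permutation matrices to move supports only adds factors you never show how to remove. So the ``main obstacle'' you identify is real within your setup, and your proposal leaves it unresolved.

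The missing observation, which eliminates every case distinction, is to take the middle factor to be the corner matrix $E_{1,n}=e_1e_n^{T}$ --- which \emph{is} Toeplitz --- and to prescribe the \emph{first column} of the left factor and the \emph{last row} of the right factor: with $T_1$ the lower triangular Toeplitz matrix whose first column is $u$, $T_2=E_{1,n}$, and $T_3$ the lower triangular Toeplitz matrix whose last row is $v^{T}$, one has
\begin{equation*}
T_1T_2T_3=(T_1e_1)(e_n^{T}T_3)=uv^{T}=M,
\end{equation*}
with no invertibility assumptions and no hypotheses on the supports of $u$ and $v$. This is precisely the paper's proof. Your opening plan does mention prescribing a last row, but the key combination (last row of the right factor paired with the Toeplitz corner matrix $E_{1,n}$ rather than $E_{1,1}$) never appears, so as written the argument has a genuine gap rather than merely omitted routine verification.
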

\begin{proof}
Let \( M \in \mathcal{M}_{n \times n}(\mathbb{C}) \) be a matrix of rank \( 1 \). 
In this case, all rows of \( M \) are scalar multiples of any of its nonzero rows. Specifically, if $\mathbf{a}_i$ is a nonzero row of $M$:
\[
M = 
\begin{pmatrix}
\mathbf{a}_1 \\ \vdots \\ \mathbf{a}_n
\end{pmatrix}
= 
\begin{pmatrix}
\lambda_1 \mathbf{a}_i \\   \vdots \\ \lambda_i \mathbf{a}_i \\ \vdots \\ \lambda_n \mathbf{a}_1
\end{pmatrix}
= 
\underbrace{\begin{pmatrix}
\lambda_1 \\ \vdots \\ \lambda_i \\ \vdots \\ \lambda_n
\end{pmatrix}}_{\lambda}
\cdot 
\underbrace{\begin{pmatrix} a_{i1} & \cdots & a_{in} \end{pmatrix}}_{\mathbf{a}_i},
\]

with $\lambda_1 = 1$. This representation highlights that \( M \) can be factored into the product of a column vector and any of its nonzero rows, indicating its rank is \( 1 \).

We now construct three Toeplitz matrices:

\begin{enumerate}
    \item Using the column vector \( \lambda \), we define \( T_1 \) to be the only lower triangular Toeplitz matrix with $\lambda$ as its first column. In other words, \( T_1 \) has the form:
    \[
    T_1 = 
    \begin{pmatrix}
    \lambda_1 & 0 & 0 & \cdots & 0 \\
    \lambda_2 & \lambda_1 & 0 & \cdots & 0 \\
    \lambda_3 & \lambda_2 & \lambda_1 & \cdots & 0 \\
    \vdots & \vdots & \vdots & \ddots & 0 \\
    \lambda_n & \lambda_{n-1} & \lambda_{n-2} & \cdots & \lambda_1
    \end{pmatrix}.
    \]

 \item \( T_2 \) is a Toeplitz matrix with zeros in all positions except for the position \( (1, n) \), which contains a 1. The matrix \( T_2 \) has the form:
    \[
    T_2 =
    \begin{pmatrix}
    0 & 0 & 0 & \cdots & 1 \\
    0 & 0 & 0 & \cdots & 0 \\
    0 & 0 & 0 & \cdots & 0 \\
    \vdots & \vdots & \vdots & \ddots & \vdots \\
    0 & 0 & 0 & \cdots & 0
    \end{pmatrix}.
    \]
    \item Using the row vector \( \mathbf{a}_i \), we define \( T_3 \) to be the only lower triangular Toeplitz matrix with $\mathbf{a}_i$ as its last row. In this way, \( T_3 \) has the form:
    \[
    T_3 =
    \begin{pmatrix}
    a_{in} & 0 & 0 & \cdots & 0     \\
    a_{i,{n-1}} & a_{in} & 0 & \cdots & 0 \\
    \vdots & \vdots & \ddots & \vdots & \vdots \\
a_{i2} & a_{i3} & \cdots & a_{in} & 0 
\\    a_{i1} & a_{i2} & a_{i3} & \cdots & a_{in}
    \end{pmatrix}.
\]
\end{enumerate}

We have that \( M = T_1 \cdot T_2 \cdot T_3 \). 
\end{proof}

The following result directly follows from the above proposition.

\begin{corollary}
If $M \in \mathcal M_{n\times n}(\C)$ and $\mathrm{rank}(M) = 1$, then $\Toep(M) \leq 3$.
\end{corollary}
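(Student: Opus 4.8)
The plan is simply to unwind the definition of the Toeplitz number and plug in the factorization supplied by Proposition~\ref{rank:1}. Recall that $\Toep(M)$ is, by definition, the least $s \geq 1$ for which there exist Toeplitz matrices $T_1,\dots,T_s \in \C^{n\times n}$ with $M = T_1 T_2 \cdots T_s$; that is, the minimal length of a Toeplitz decomposition of $M$. First I would invoke Proposition~\ref{rank:1}, which asserts that every rank-$1$ matrix $M \in \mathcal M_{n\times n}(\C)$ can be written as $M = T_1 T_2 T_3$ with each $T_i$ Toeplitz. This exhibits a Toeplitz decomposition of $M$ of length $3$, so $3$ belongs to the set of admissible lengths of Toeplitz decompositions of $M$, and hence its minimum, $\Toep(M)$, is at most $3$. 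That is precisely the claimed inequality.

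Consequently there is no real obstacle here: the entire content of the corollary is already carried by Proposition~\ref{rank:1}, and the deduction is a one-line application of the definition of $\Toep$. The only point worth flagging is that the estimate need not always be attained --- a rank-$1$ Toeplitz matrix has Toeplitz number $1$, and certain rank-$1$ matrices that arise as a product of a Toeplitz matrix with the shift have Toeplitz number $2$ --- so the natural follow-up question of characterizing which rank-$1$ matrices have Toeplitz number exactly $3$ remains open, and by Proposition~\ref{pr:ecuacion} it can in principle be addressed, for small $n$, as an ideal-membership problem.
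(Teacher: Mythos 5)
Your proposal is correct and is exactly the paper's argument: the corollary is stated there as following directly from Proposition~\ref{rank:1}, with the same one-line appeal to the definition of $\Toep(M)$ as the minimal length of a Toeplitz decomposition. Your closing remarks on when the bound is attained are a reasonable aside but add nothing needed for the proof.
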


For $n \times n$ matrices of rank $n-1$, we have a different approach.

\begin{prop} 
\label{rank:n-1}
Let $M \in \mathcal M_{n \times n}(\C)$ with rank $n-1$. Then $M = P T_1 T_2,$ with $P$ an invertible matrix and $T_1,T_2$ two Toeplitz matrices.
\end{prop}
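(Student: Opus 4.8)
The plan is to reduce the problem, via an invertible change of basis $P$, to factoring a particularly simple rank-$(n-1)$ matrix as a product of two Toeplitz matrices. Since $M$ has rank $n-1$, its kernel is $1$-dimensional, spanned by some nonzero vector $v \in \C^n$. Multiplying on the left by a suitable invertible $P^{-1}$ does not change the column space condition but lets us normalize the image: more precisely, we want to write $M = P N$ where $N$ is a rank-$(n-1)$ matrix whose row space and column space are as convenient as possible. The natural target for $N$ is something like the matrix with $1$'s on the superdiagonal and zeros elsewhere (a single Jordan block for eigenvalue $0$), or a closely related Toeplitz-looking matrix — call it the "shift" matrix $S$ — which itself is already Toeplitz, so $N = S = S \cdot I$ trivially works if we can arrange $M = PS$. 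However, $M = PS$ forces $M$ to have a zero column, which is not generic, so instead I would aim for $N = T_1 T_2$ with $T_1$ a lower-triangular Toeplitz matrix and $T_2$ an upper-triangular Toeplitz matrix, mimicking the rank-$1$ construction of Proposition \ref{rank:1} but now "filling" all but one dimension.

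Concretely, here are the steps I would carry out. First, fix a nonzero $v$ spanning $\ker M$ and, using elementary column operations (which amount to right-multiplication — so actually I would phrase this as a left factor by transposing the roles), bring $M$ to a normal form $N$ via $M = PN$ where $P$ is invertible: the idea is to choose $P$ so that $N$ has its first $n-1$ columns equal to the standard basis vectors $e_1, \dots, e_{n-1}$ (possible since those columns of $M$ can be taken linearly independent after permuting) and its last column equal to the coordinate vector $w$ of $-v$'s dependency relation. Second, observe that such an $N$ — identity in the first $n-1$ columns, arbitrary vector in the last — can be written explicitly as a product of two Toeplitz matrices: take $T_2$ to be the upper-triangular Toeplitz matrix built so that $N T_2^{-1}$, or rather $T_1 := N T_2^{-1}$, comes out Toeplitz; equivalently, guess $T_1$ lower-triangular Toeplitz and $T_2$ upper-triangular Toeplitz and solve the resulting (triangular, hence easily solvable) linear system for their entries. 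Third, verify the product and collect $P$ together with any permutation used into a single invertible factor.

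I expect the main obstacle to be Step 2: showing that the normal form $N$ — which has the shape $(e_1 \mid \cdots \mid e_{n-1} \mid w)$ for an essentially arbitrary $w \in \C^n$ — genuinely factors as a product of \emph{two} Toeplitz matrices, rather than needing a third as in the rank-$1$ case. The delicate point is that a lower-triangular times upper-triangular Toeplitz product has a constrained structure, and one must check that the free parameters in $T_1, T_2$ suffice to hit every $w$; this likely requires choosing $T_1$ to be the identity-like Toeplitz shift or a unipotent lower-triangular Toeplitz matrix and then reading off $T_2$ column by column, being careful about the bottom entry of $w$ (the one corresponding to the kernel relation's coefficient on $e_n$), which may force a non-triangular Toeplitz choice or an adjustment absorbed into $P$. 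If a clean two-Toeplitz factorization of $N$ is not available for all $w$, the fallback is to enlarge $P$: since $P$ is allowed to be \emph{any} invertible matrix, one can push more of the "difficulty" into $P$ and demand less of $T_1 T_2$, and the argument should still go through as long as the combined rank bookkeeping is respected.
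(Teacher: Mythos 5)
There is a genuine gap, in fact two. First, your normal form $N=(e_1\,|\,\cdots\,|\,e_{n-1}\,|\,w)$ obtained from $M=PN$ exists only when the first $n-1$ columns of $M$ are linearly independent: left multiplication by $P^{-1}$ cannot change which columns of $M$ are dependent, and the ``after permuting'' fix is a \emph{right} multiplication by a permutation matrix, which can be absorbed neither into $P$ (it sits on the left) nor into the two Toeplitz factors (a general permutation is not Toeplitz, and tacking it on would give three right factors). This is precisely why the position of the missing pivot column must enter the argument as a parameter; the paper's proof runs a case analysis on $\mathcal P=\{1,\dots,r\}\cup\{r+2,\dots,n\}$ for exactly this reason. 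Second, even in the favorable case your Step 2 cannot work as proposed: if $N=T_1T_2$ with $T_1$ lower-triangular Toeplitz and $T_2$ upper-triangular Toeplitz, then $\det N=\bigl((T_1)_{1,1}(T_2)_{1,1}\bigr)^n=\bigl(N_{1,1}\bigr)^n=1$, while $N$ has rank $n-1$; so no choice of the ``free parameters'' hits any such $w$. Your hedges (``non-triangular choice'', ``push more into $P$'') do not repair this: since an arbitrary invertible $P$ already encodes all row operations, the entire content of the proposition is to exhibit \emph{some} matrix row-equivalent to $M$ that is a product of two Toeplitz matrices, and that construction is exactly what is missing.

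The idea you would need (and which the paper supplies) is of a different shape: take $T_1$ to be the Toeplitz matrix with identity blocks $I_s$ and $I_r$ sitting on the diagonals $r+1$ above and $s+1$ below the main one ($s=n-1-r$). This $T_1$ is singular by design: multiplying $T_1\cdot C$ moves the last $s$ rows of $C$ to the top, annihilates one row, and moves the first $r$ rows to the bottom. The problem then reduces to constructing a genuine (non-triangular) Toeplitz matrix $T_2$ whose rows $1,\dots,r$ and $r+2,\dots,n$ span the row space of $M$; the paper achieves this by an explicit iterative sequence of row operations on $\mathrm{rref}(M)$ (producing the unipotent upper-triangular perturbations $I_r+U^{(r)}_r$, $I_s+U^{(r)}_s$ and a compatible block $B$) so that the nonzero rows can be completed to a Toeplitz matrix by filling in the single missing row. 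Without this device, or an equivalent one, the reduction to a normal form does not by itself yield the two-factor decomposition.
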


\begin{proof}
Let \( M \in \mathcal{M}_{n \times n}(\mathbb{C}) \) be a matrix of rank \( n-1 \).
Then, $M$ can be transformed into its row reduced echelon form (\(\mathrm{rref}(M)\)) via an invertible matrix $P$.
That is:
\[
M = P_0 \cdot \mathrm{rref}(M), \quad \text{with } P_0 \text{ an invertible matrix}.
\]
The matrix $\mathrm{rref}(M)$ has exactly $n-1$ pivots, and hence one of its rows is entirely zero. Let \( \mathcal{P} \subseteq \{1, \ldots, n\} \) be the set of 
indices where a pivot appears in \( \mathrm{rref}(M) \); then $\mathcal P$ has cardinality \( n-1 \), so exactly one column index is missing. We analyse two cases: 

\begin{enumerate}
\item \textbf{Case 1:} $\mathcal{P} = \{2, \ldots, n\}.$
In this case, \( \mathrm{rref}(M) \) is already a Toeplitz matrix and the result holds.

\item \textbf{Case 2:} $\mathcal P$ is the union of two blocks of consecutive indices, separated by one missing index (the second block, potentially empty if $n=r+1$):
\[
\mathcal{P} = \{1, \ldots, r\} \cup \{r+2, \ldots, n\}, \quad \text{with } 1 \leq r < n \text{ and } s = n-1-r \geq 0.
\]
\end{enumerate}

We are interested in \textbf{Case 2}, where we have that 

\[
\mathrm{rref}(M) =
\left( \begin{matrix}
I_r & \mathbf{a} & 0_{r \times s} \\
0_{s \times r} & 0_{s \times 1} & I_s \\
0_{1 \times r} & 0 & 0_{1 \times s}
\end{matrix}\right) \hbox{ with } \mathbf a = \left( \begin{matrix} a_{1,r+1} \\ \vdots \\ a_{r,r+1}\end{matrix}\right),
\]
where \( I_m \) is the identity matrix of size \( m \times m \), and \( 0_{p \times m} \) denotes a block of zeros of size \( p \times m \).
What remains is to check that $\mathrm{rref}(M)$ \emph{is row equivalent to another one} that can be decomposed as the product of two Toeplitz matrices. Therefore, we are allowed to perform row operations on the $\mathrm{rref}(M)$ and this will only affect the choice of invertible matrix $P_0$, substituting it by another invertible matrix $P$.

Thus we now construct an auxiliary matrix $A$, row-equivalent to $\mathrm{rref}(M)$ by performing the following row operations:

\begin{enumerate}
    \item \textbf{Initial Row Reordering:} Move \( \mathrm{Row}_n \) to position \( r+1 \), shifting \( \mathrm{Row}_i \) with \( i \in\{ r+1, \ldots, n-1\} \) one position down each.
    \item \textbf{First Row Operation:} 
    \[
    \mathrm{Row}_i \to \mathrm{Row}_i + a_{r,r+1} \cdot \mathrm{Row}_{i+1}, \quad \text{for } i = 1, \ldots, r-1, r+2, \ldots, n-1.
    \]
\end{enumerate}
From now on, denote by $S_m$ the \emph{shift} matrix of size $m \times m$, that is, the matrix with $1$ on the diagonal above the main one, and zero elsewhere. We also denote by $B_m$ its transpose (the \emph{backward shift}). Notice that $S_m^k$ (respectively, $B_m^k$) is a Toeplitz matrix with entries equal to 1 on the $k$-th diagonal above (resp., below) the main one and 0 elsewhere.
In this way, $\mathrm{rref}(M)$ is transformed into:
\[  A^{(2)} =
\left( \begin{matrix}
I_r + a_{r,r+1} S_r & \mathbf{a^{(2)}} & 0_{r \times s} \\[8pt]
0_{1 \times r} & 0 & 0_{1 \times s} \\[8pt]
0_{s \times r} & 0_{s \times 1} & I_s + a_{r,r+1} S_s
\end{matrix}\right),
\]
where the vector \( \mathbf{a^{(2)}} \) is updated as:
    \[
    \mathbf{a^{(2)}} =
   (I_r+a_{r,r+1} B_r) \cdot \mathbf a.\]

We continue with additional row operations.
\begin{enumerate}[label=(\alph*),start=3]
\item The \textbf{General \( k \)-th Row Operation} is as follows:
    \[
    \mathrm{Row}_i \to \mathrm{Row}_i + \mathbf{a}_{r-k+1}^{(k+2)} \cdot \mathrm{Row}_{i+(k+2)},
    \hbox{ for } 
 i = 1, \dots, r-k , r+k+2, \dots, n-(k+2),\] where the vector \( \mathbf{a}^{(t)} \) is updated as:
    \[
    \mathbf{a}^{(t)} =
     \mathbf a^{(t-1)} + \mathbf{a}^{(t-1)}_{r+1-t} B_r^t \mathbf a.\]
\end{enumerate}
      
After all the operations, the resulting matrix is:
\[A^{(r)} =
\left( \begin{matrix}
I_r + U^{(r)}_r & \mathbf{a^{(r)}} & 0_{r \times s} \\[8pt]
0_{1 \times r} & 0 & 0_{1 \times s} \\[8pt]
0_{s \times r} & 0_{s \times 1} & I_s + U^{(r)}_s
\end{matrix}\right),
\] 
where: (1) \( \mathbf{a^{(r)}} \) is a column vector of size \( r \times 1 \), iteratively updated by the row operations and
(2) \( U^{(r)}_m \) is an upper triangular matrix of size \( m \times m \) with: \( a_{r,r+1} \) on the first superdiagonal, \( a_{r-1}^{(2)} \) on the second superdiagonal, \( a_{r-2}^{(3)} \) on the third one, and so on until  \( a_{1}^{(r)} \) appears on the \( (r-1) \)-th  one.

Observe that the matrix \(A^{(r)} \) is not Toeplitz, but the property \eqref{eqn100} is only broken at the elements of \(\mathrm{Row}_{r+1}\) and on the top right block. Let's first deal with the latter. Because the lower right block is invertible, $A^{(r)}$ is row equivalent to another matrix $ \hat{A}^{(r)}$ of the form

 \[\hat{A}^{(r)} =
\left( \begin{matrix}
I_r + U^{(r)}_r & \mathbf{a^{(r)}} & B \\[8pt]
0_{1 \times r} & 0 & 0_{1 \times s} \\[8pt]
0_{s \times r} & 0_{s \times 1} & I_s + U^{(r)}_s
\end{matrix}\right),
\] 
where $B$ is chosen so that the matrix formed by $\mathbf{a}^{(r)}$ and $B$ has the Toeplitz property \eqref{eqn100}.

\begin{enumerate}[label=(\alph*),start=4]
\item \textbf{Final Row Reordering:} Move the last $s$ rows of $A^{(r)}$ to the top and the first $r$ rows of $A^{(r)}$ to the botton.
\end{enumerate}

Thus the resulting matrix is:
\[ 
A =
\left( \begin{matrix}
0_{s \times r} & 0_{s \times 1} & I_s + U^{(r)}_s
 \\[8pt]
0_{1 \times r} & 0 & 0_{1 \times s} \\[8pt]
I_r + U^{(r)}_r & \mathbf{a^{(r)}} & B
\end{matrix}\right).
\]

The matrix $A$ is row equivalent to $\mathrm{rref}(M)$ and thus there is an invertible matrix $P$ making $M=PA$. What remains now is to decompose $A$ as the product of two Toeplitz matrices.

The first of these will be the Toeplitz matrix $T_1$ given by 
\[
T_1 =
\left( \begin{matrix}
0_{s \times r} & 0_{s \times 1} & I_s \\
0_{1 \times r} & 0 & 0_{1 \times s} \\
I_r & 0_{r \times 1} & 0_{r \times s}
\end{matrix}\right),
\]
Note that, if one multiplies $T_1$ by a given matrix $C$ of size $n\times n$, the multiplication $T_1 \cdot C$ rearranges the rows of $C$ as follows: (1) The last $s$ rows of $C$ are moved to the top; (2) the $s+1$-th row becomes a completely null row; and (3) the first $r$ rows of $C$ are moved to the bottom.

Now, our second Toeplitz matrix, \( T_2 \), is simply the result of adding a vector on row $r+1$ of $\hat{A}^{(r)}$ to transform it into a Toeplitz matrix. 
Specifically, we replace \(\mathrm{Row}_{r+1}\) with an adequate vector $(0_{1 \times r},  1, (\mathbf{a}^{(r)})')$ such that each descending diagonal of \( T_2 \) contains the same value, ensuring the Toeplitz property is satisfied.

In this way, it can be verified that $$M=P \cdot T_1 \cdot T_2.$$
\end{proof}

Now, using Theorem \ref{YeLim1} and the above Proposition we can deduce the following result.

\begin{corollary}
If $M \in \mathcal M_{n\times n}(\C)$ and $\mathrm{rank}(M) = n-1$, then $\Toep(M) \leq 2\lfloor \frac{n}{2} \rfloor + 4$.
\end{corollary}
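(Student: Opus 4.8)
The plan is to derive this as a formal consequence of Proposition~\ref{rank:n-1} together with the invertible-matrix bound in Theorem~\ref{YeLim1}. The first step is to record the elementary subadditivity of the Toeplitz number under matrix multiplication: if $A = S_1 \cdots S_p$ and $B = R_1 \cdots R_q$ are Toeplitz decompositions, then $A B = S_1 \cdots S_p R_1 \cdots R_q$ is a product of $p + q$ Toeplitz matrices, so $\Toep(AB) \le \Toep(A) + \Toep(B)$. In particular, pre- or post-multiplying by a single Toeplitz matrix raises the Toeplitz number by at most $1$.

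Next, given $M \in \mathcal M_{n \times n}(\C)$ of rank $n-1$, I would invoke Proposition~\ref{rank:n-1} to write $M = P T_1 T_2$ with $P$ invertible and $T_1, T_2$ Toeplitz. By the subadditivity just noted, together with $\Toep(T_1) = \Toep(T_2) = 1$, this gives $\Toep(M) \le \Toep(P) + 2$. Finally, since $P$ is invertible, the second inequality of Theorem~\ref{YeLim1} yields $\Toep(P) \le 2\lfloor n/2 \rfloor + 2$, and hence $\Toep(M) \le 2\lfloor n/2 \rfloor + 4$, which is the claimed bound.

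There is no real obstacle here: all the substantive work is carried out in Proposition~\ref{rank:n-1}, and the corollary is a purely formal combination. The only things worth a quick sanity check are the degenerate and split cases of that proposition — namely $n = 1$ (where rank $n-1 = 0$ forces $M = 0$, itself Toeplitz, and the bound $2\lfloor 1/2\rfloor + 4 = 4$ holds with room to spare), and Case~1 of Proposition~\ref{rank:n-1}, where $\mathrm{rref}(M)$ is already Toeplitz so $M = P \cdot \mathrm{rref}(M)$ and the same estimate $\Toep(M) \le \Toep(P) + 1 \le 2\lfloor n/2\rfloor + 3$ applies a fortiori.
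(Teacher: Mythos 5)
Your argument is correct and is exactly the route the paper intends: decompose $M = P T_1 T_2$ via Proposition~\ref{rank:n-1}, bound $\Toep(P) \le 2\lfloor n/2\rfloor + 2$ by the invertible case of Theorem~\ref{YeLim1}, and conclude by subadditivity of the Toeplitz number under products. Your extra sanity checks of the degenerate cases are harmless and consistent with the paper.
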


\subsection{Toeplitz decomposition of arbitrary matrices of size $3\times 3$}

Let $M$ be a $3\times 3$ matrix. If $M$ has rank $1$, then by Proposition \ref{rank:1}, $M$ can be expressed as the product of three Toeplitz matrices. If $M$ has rank $3$, then $M$ is invertible and, thus, $\Toep(M) \leq 2\lfloor \frac{3}{2} \rfloor + 2 = 4$ (by Theorem \ref{YeLim1}). Therefore, apart from possibly improving on the estimate for invertible matrices, the remaining question is what is the behavior of rank $2$ matrices. Once we prove the following we will know that $Toep_3\leq 4$.

\begin{prop}
Let $M\in \mathcal M_{3\times 3}(\C)$ with $\mathrm{rank}(M) = 2$, then $\Toep(M) \leq 4$.
\end{prop}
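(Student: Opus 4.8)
The plan is to reduce the rank-$2$ case to the rank-$(n-1)$ result already proved. For a $3 \times 3$ matrix $M$ of rank $2$ we have $n-1 = 2$, so Proposition \ref{rank:n-1} applies directly: $M = P T_1 T_2$ with $P$ invertible and $T_1, T_2$ Toeplitz. Now $P$ is an invertible $3 \times 3$ matrix, so by the invertible bound in Theorem \ref{YeLim1} we get $\Toep(P) \leq 2 \lfloor 3/2 \rfloor + 2 = 4$. This would only give $\Toep(M) \leq 6$, which is not good enough; so the real work is to be smarter about how $P$ interacts with $T_1$.

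The key observation I would exploit is that in the proof of Proposition \ref{rank:n-1} the first Toeplitz factor $T_1$ is the \emph{fixed} permutation-type matrix
\[
T_1 = \left( \begin{matrix} 0_{s \times r} & 0_{s \times 1} & I_s \\ 0_{1 \times r} & 0 & 0_{1 \times s} \\ I_r & 0_{r \times 1} & 0_{r \times s} \end{matrix}\right),
\]
which for $n = 3$, $r + s = 2$, is one of a short explicit list of concrete $3 \times 3$ matrices (depending on whether $r = 0,1,2$). So I would argue that $P T_1$ is again an invertible $3 \times 3$ matrix (product of an invertible matrix and a fixed matrix — one checks $T_1$ is invertible in the relevant cases, or handles the degenerate case separately), and then $M = (P T_1) T_2$ expresses $M$ as (invertible matrix) $\cdot$ (Toeplitz matrix). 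Applying Theorem \ref{YeLim1} to the invertible matrix $P T_1$ gives $\Toep(P T_1) \leq 4$, hence $\Toep(M) \leq \Toep(PT_1) + 1 = 5$ — still not enough. To get to $4$ I would instead absorb $T_2$: write $M = P \cdot (T_1 T_2)$ and note we want $\Toep$ of a product where one factor is invertible. The cleanest route is: since $\mathrm{rank}(M) = 2$ and $M$ is a $3\times 3$ matrix, transpose if necessary and use row/column reductions so that $M = Q N$ where $Q$ is invertible and $N$ is a \emph{particularly simple} rank-$2$ matrix (e.g. $\mathrm{rref}$ form), then exhibit $N$ explicitly as a product of \emph{two} Toeplitz matrices when possible, and absorb $Q$ using $\Toep(Q) \le 2$ for invertible $3\times 3$...

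Here the honest difficulty surfaces: $\Toep$ is not subadditive in a way that directly gives $\Toep(AB) \le \Toep(A) + \Toep(B) - 1$ unless one can merge a Toeplitz tail of one factorization with a Toeplitz head of the other, which generally fails. So the actual proof must be more hands-on. The approach I would therefore commit to: take $M$ of rank $2$, put it in the form $M = P \cdot \mathrm{rref}(M)$ as in Proposition \ref{rank:n-1}, and then \emph{directly} factor the small explicit $3\times 3$ matrix $A$ (row-equivalent to $\mathrm{rref}(M)$) together with $P$ — i.e., redo the rank-$(n-1)$ construction but track that for $n=3$ the invertible prefactor $P$ can itself be written as a product of \emph{two} Toeplitz matrices, using the $n=2$-style elementary factorizations embedded in $3\times 3$, plus the explicit diagonal-type decompositions like \eqref{eqn300} and those appearing in the proof of Theorem \ref{thmdiag}. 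Concretely I would enumerate the (few) possible rref shapes of a rank-$2$ $3\times 3$ matrix, for each give an explicit factorization $M = T_1 T_2 T_3 T_4$ with all $T_i$ Toeplitz, checking each by direct multiplication.

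The main obstacle is exactly this bookkeeping of the invertible factor: naively $\Toep(P) \le 4$ for invertible $3\times 3$ is too weak, so one must either (i) show every invertible $3\times 3$ matrix arising as the prefactor $P$ in this construction actually satisfies $\Toep(P) \le 3$ and that its last Toeplitz factor can be merged with the first Toeplitz factor of $T_1 T_2$, or (ii) bypass $P$ entirely by choosing the reduction so cleverly that $M$ itself lands in an explicitly four-Toeplitz-factorable normal form. I expect option (ii) — a short case analysis over the handful of rank-$2$ normal forms, each handled by an explicit $4$-factor product verified by direct computation — to be the cleanest and is what I would write up, with the rank-$1$ sub-case already covered by Proposition \ref{rank:1} and the only genuine content being the rank-exactly-$2$ normal forms.
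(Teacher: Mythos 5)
There is a genuine gap: your write-up is a plan rather than a proof, and the one route you sketch in enough detail to assess rests on steps that fail. The claim ``absorb $Q$ using $\Toep(Q)\le 2$ for invertible $3\times 3$'' is simply false --- the central point of this paper is that the invertible matrix $\mathrm{diag}(1,2,3)$ has $\Toep=3$, so no bound $\Toep(Q)\le 2$ for invertible $3\times 3$ matrices is available, and even $\Toep(Q)\le 3$ would not help without the merging of factors that you yourself note is unavailable. Your fallback, option (ii), is to ``enumerate the rank-$2$ rref normal forms and give an explicit $4$-factor product for each,'' but this does not bypass the difficulty you identified: a general rank-$2$ matrix is only \emph{row equivalent} to its rref shape, i.e.\ $M=P\cdot\mathrm{rref}(M)$ with $P$ an arbitrary invertible matrix, so factoring the normal form says nothing about $M$ unless the prefactor $P$ is dealt with --- which is exactly the unresolved issue. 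And if instead you mean a case analysis on $M$ itself with its entries as parameters, then the entire mathematical content of the proposition is precisely those explicit factorizations, none of which you produce or even indicate how to produce.

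For comparison, the paper's proof avoids any invertible prefactor by exploiting the rank hypothesis on the \emph{right}: if columns $1$ and $3$ are independent and column $2$ equals $\lambda C_1+\beta C_3$, then $M=N\cdot T$ where $N$ is $M$ with its middle column replaced by zero and $T$ is the Toeplitz matrix with first row $(1,\lambda,0)$, second row $(\beta,1,\lambda)$, third row $(0,\beta,1)$ (or a companion ``reversed'' variant). The problem then reduces to writing the zero-middle-column matrix $N$ as a product of two (occasionally three) Toeplitz matrices, which is done by an explicit case analysis on which of the entries $a,c,d,g,i$ vanish, with the proportional-columns case handled by first peeling off a cyclic-permutation Toeplitz factor. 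That device --- encoding the column dependence itself as right multiplication by a Toeplitz matrix --- is the key idea your proposal is missing, and without it (or a completed set of explicit $4$-factor decompositions covering all cases) the statement remains unproved.
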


\begin{proof}
Let $M\in \mathcal M_{3\times 3}(\C)$ with $\mathrm{rank}(M) = 2$. In this case, two columns are linearly independent, while the third is a linear combination of the other two.  
First, we assume that columns 1 and 3 are the independent ones and $M$ has the following form:

\[
M = \begin{pmatrix}
a & b & c \\ 
d & e & f \\ 
g & h & i
\end{pmatrix} = \begin{pmatrix} 
C_1 & | & \lambda \cdot C_1 + \beta\cdot C_3 & | & C_3 
\end{pmatrix}.
\] where $C_i$ represents column $i$. We shall employ a variety of techniques. The following two strategies consist on identifying structure in $M$ that can be factored with one Toeplitz matrix and an \emph{easier} version of $M$. 
\begin{enumerate}
\item \underline{Strategy One}\[
\begin{pmatrix} C_1 & | & 0 & | & C_3 \end{pmatrix} 
\begin{pmatrix} 
1 & \lambda & 0 \\ 
\beta & 1 & \lambda \\ 
0 & \beta & 1 
\end{pmatrix} 
= 
\begin{pmatrix} 
C_1 & | & \lambda \cdot C_1 + \beta\cdot C_3 & | & C_3 
\end{pmatrix}
\]

\item \underline{Strategy Two}
\[\begin{pmatrix} C_1 & | & 0 & | & C_3 \end{pmatrix} 
\begin{pmatrix} 
0 & \lambda & 1 \\ 
\beta & 0 & \lambda \\ 
1 & \beta & 0 
\end{pmatrix} 
= 
\begin{pmatrix} 
C_3 & | & \lambda \cdot C_1 + \beta\cdot C_3 & | & C_1 
\end{pmatrix}
\]
\end{enumerate}

We will analyze different cases:  
\begin{enumerate}
    \item \textbf{Case \(c \neq 0 \neq g\):} 
Note that:

\[
\underbrace{\begin{pmatrix}
g & d & a \\ 
\frac{f\cdot g}{c} & g & d \\ 
\frac{i\cdot g}{c} & \frac{f\cdot g}{c} & g
\end{pmatrix}}_{T_1} \cdot 
\underbrace{\begin{pmatrix}
0 & 0 & \frac{c}{g} \\ 
0 & 0 & 0 \\ 
1 & 0 & 0
\end{pmatrix}}_{T_2} =
\begin{pmatrix}
a & 0 & c \\ 
d & 0 & f \\ 
g & 0 & i
\end{pmatrix}.
\]

Now, by using Strategy One, we can express $M$ as the product of three Toeplitz matrices.

    \item \textbf{Case \(c = g= 0\):} 

\[
\underbrace{\begin{pmatrix}
0& d & a \\ 
f & 0 & d \\ 
i & f & 0
\end{pmatrix}}_{T_1} \cdot 
\underbrace{\begin{pmatrix}
0 & 0 & 1\\
0 & 0 & 0 \\
1 & 0 & 0
\end{pmatrix}}_{T_2} =
\begin{pmatrix}
a & 0 & 0 \\ 
d & 0 & f \\ 
0 & 0 & i
\end{pmatrix}.
\]

Again with Strategy One we achieve our goal.

For the remaining cases, we can now suppose $c=0 \neq g$. Otherwise, we just need to transpose each Toeplitz factor and invert the order of the factorization.
    \item \textbf{Case \(c = 0 \neq agi\):} 

\[
\underbrace{\begin{pmatrix}
a & \frac{a\cdot f}{i} & 0 \\ 
d & a & \frac{a\cdot f}{i} \\ 
g & d & a
\end{pmatrix}}_{T_1} \cdot 
\underbrace{\begin{pmatrix}
0 & 0 & 1\\
0 & 0 & 0 \\
\frac{i}{a} & 0 & 0
\end{pmatrix}}_{T_2} =
\begin{pmatrix}
0 & 0 & a \\ 
f & 0 & d \\ 
i & 0 & g
\end{pmatrix}.
\]

In this case we need to apply Strategy Two to achieve our goal.

    \item \textbf{Case \(a=c =i= 0 \neq g \):} Note that if \(c = a = 0\), then it follows that \(b = 0\) since the second column is a linear combination of the other two.

\[
\underbrace{\begin{pmatrix}
0 & f & 0 \\ 
d & 0 & f \\ 
g & d & 0
\end{pmatrix}}_{T_1} \cdot 
\underbrace{\begin{pmatrix}
0 & 0 & 1\\
0 & 0 & 0 \\
1 & 0 & 0
\end{pmatrix}}_{T_2} =
\begin{pmatrix}
0 & 0 & 0 \\ 
f & 0 & d \\ 
0 & 0 & g
\end{pmatrix}.
\]

Now we apply Strategy Two and we are done.

    \item \textbf{Case \( a = c =0 \neq dgi\):} again in this case $b=0$ and we have

\[
\begin{pmatrix}
0 & 0 & 0\\
d & e & f\\
g & h & i
\end{pmatrix} = 
\underbrace{\begin{pmatrix}
0 & 1 & 0 \\
0 & 0 & 1 \\
1 & 0 & 0
\end{pmatrix}}_{T_1} \cdot 
\underbrace{\begin{pmatrix}
0 & 0 & \frac{i}{d} \\
0 & 0 & 0 \\
1 & 0 & 0
\end{pmatrix}}_{T_2} \cdot
\underbrace{\begin{pmatrix}
d & e & f \\
\frac{hd}{i} & d & e \\
\frac{gd}{i} & \frac{hd}{i} & d
\end{pmatrix}}_{T_3}.
\]
If $i=0$ but $g\neq 0$ then reverse the order and the roles of the symbols.

\item \textbf{Case \(a =c =  d = 0 \neq gi\):} again in this case $b=0$ and we can assume that $f\neq 0$ for the same reason. Now we have that:

\[
\begin{pmatrix}
0 & 0 & 0\\
0 & e & f\\
g & h & 0
\end{pmatrix} = 
\underbrace{\begin{pmatrix}
0 & 0 & 0 \\
1 & 0 & 0 \\
0 & 1 & 0
\end{pmatrix}}_{T_1} \cdot 
\underbrace{\begin{pmatrix}
0 & 0 & \frac{f}{g} \\
1 & 0 & 0 \\
0 & 1 & 0
\end{pmatrix}}_{T_2} \cdot
\underbrace{\begin{pmatrix}
g & h & i \\
\frac{eg}{f} & g & h \\
0 & \frac{eg}{f} & g
\end{pmatrix}}_{T_3}.
\]

\item \textbf{Case \(c = i = 0 \neq ag\):} in this case $f\neq 0$. Now we have that:

\[
\begin{pmatrix}
a & b & 0\\
d & e & f\\
g & h & 0
\end{pmatrix} = 
\underbrace{\begin{pmatrix}
0 & 0 & 1 \\
1 & 0 & 0 \\
0 & 1 & 0
\end{pmatrix}}_{T_1} \cdot 
\underbrace{\begin{pmatrix}
d & e & f \\
g & h & 0 \\
a & b & 0
\end{pmatrix}}_{B}. \]
Since $B$ is under the hypotheses of case (a), then $M$ is a product of $4$ Toeplitz matrices.
\end{enumerate}

Now, we assume that $M$ has rank $2$ and columns 1 and 3 are proportional.
One can write 

\[
\begin{pmatrix}
a & b & c\\
d & e & f\\
g & h & i
\end{pmatrix} = 
\underbrace{\begin{pmatrix}
 b & c & a \\
 e & f & d \\
 h & i & g
\end{pmatrix}}_{B} \cdot 
\underbrace{\begin{pmatrix}
0 & 1 & 0 \\
0 & 0 & 1 \\
1 & 0 & 0
\end{pmatrix}}_{T_1}. \]
The first and third columns of $B$ are not proportional and $B$ has rank $2$. Hence, if $B$ does not fall in the case (g), then it can be written as a product of 3 Toeplitz matrices and, therefore, $A$ is a product of $4$ Toeplitz matrices.

If $B$ falls in case (g), then $M$ has the following shape:
\[ M = 
\begin{pmatrix}
0 & b & 0\\
d & e & f\\
0 & h & 0
\end{pmatrix},
\]
with $b,h \neq 0$.

In the latter case, we write 
\[
M = 
\underbrace{\begin{pmatrix}
 0 & b & b \\
 d & d+e & f+e\\
 0 & h & h
\end{pmatrix}}_{B} \cdot 
\underbrace{\begin{pmatrix}
1 & -1 & 1 \\
0 & 1 & -1 \\
0 & 0 & 1
\end{pmatrix}}_{T_1} \]
and $B$ falls in case (a); thus $M$ is a product of four Toeplitz factors.
\end{proof}

All the preceding analysis supports the validity of the following theorem:
\begin{thm} \label{thm:toep3}
$3 \leq \Toep_3 \leq  4$.
\end{thm}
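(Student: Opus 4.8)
The plan is to establish the two inequalities in $3 \leq \Toep_3 \leq 4$ separately, since they are of very different natures.

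\smallskip
\noindent\textbf{Upper bound $\Toep_3 \leq 4$.} The idea is to split an arbitrary $M \in \mathcal M_{3\times 3}(\C)$ according to its rank and invoke the results already proven. If $\mathrm{rank}(M)=3$, then $M$ is invertible and Theorem \ref{YeLim1} gives $\Toep(M) \leq 2\lfloor 3/2 \rfloor + 2 = 4$. If $\mathrm{rank}(M)=1$, then Proposition \ref{rank:1} (or its Corollary) gives $\Toep(M) \leq 3 \leq 4$. If $\mathrm{rank}(M)=0$ then $M$ is the zero matrix, which is Toeplitz. The remaining case $\mathrm{rank}(M)=2$ is precisely the content of the Proposition immediately preceding the statement, which shows $\Toep(M)\leq 4$ by the exhaustive case analysis on the vanishing pattern of the entries, using Strategies One and Two together with the reductions to case (a). Taking the maximum over all $M$, we conclude $\Toep_3 \leq 4$.

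\smallskip
\noindent\textbf{Lower bound $3 \leq \Toep_3$.} Here it suffices to exhibit a single $3\times 3$ matrix $M$ with $\Toep(M) \geq 3$, i.e., a matrix that is neither Toeplitz nor a product of two Toeplitz matrices. The natural candidate is the diagonal matrix $M_3 = \mathrm{diag}(1,2,3)$ from the introduction, or more generally any diagonal matrix covered by Theorem \ref{thmdiag}: since $1 \neq 2 \neq 3 \neq 1$ and $2 \neq 0$, Theorem \ref{thmdiag} gives $\Toep(M_3) = 3$. In particular $\Toep_3 \geq \Toep(M_3) = 3$. (Equivalently, one can cite the explicit Nullstellensatz certificate of Theorem \ref{thmdiag}: the polynomials $q_1,\dots,q_9$ produced there satisfy $\sum q_i p_i = e(d-e)(f-d)(f-e)$, which is a nonzero constant when $d=1,e=2,f=3$, so $1 \in I(p_1,\dots,p_9)$ and no factorization $M_3 = T_1T_2$ exists.)

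\smallskip
\noindent\textbf{Main obstacle.} Essentially all the work has already been done in the preceding Propositions and in Theorem \ref{thmdiag}; the proof of Theorem \ref{thm:toep3} is just an assembly. The only genuinely delicate point is making sure the rank-$2$ case analysis in the Proposition above is exhaustive — that every vanishing pattern of a rank-$2$ matrix (after possibly transposing, reversing the order of factors, or permuting columns via the Toeplitz permutation matrices) falls into one of the enumerated cases (a)--(h) or the two proportional-columns subcases. Since that verification belongs to the proof of the Proposition rather than to this theorem, the proof of Theorem \ref{thm:toep3} itself is immediate, and we expect no real obstacle at this stage.

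\begin{proof}
The lower bound is immediate: the diagonal matrix $M_3 = \mathrm{diag}(1,2,3)$ satisfies $d=1\neq 2 = e \neq 3 = f \neq 1 = d$ and $e = 2 \neq 0$, so by Theorem \ref{thmdiag} we have $\Toep(M_3)=3$, whence $\Toep_3 \geq 3$.

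For the upper bound, let $M \in \mathcal M_{3\times 3}(\C)$ be arbitrary. If $\mathrm{rank}(M)=0$, then $M=0$ is Toeplitz and $\Toep(M)=1$. If $\mathrm{rank}(M)=1$, then $\Toep(M)\leq 3$ by the Corollary to Proposition \ref{rank:1}. If $\mathrm{rank}(M)=2$, then $\Toep(M)\leq 4$ by the Proposition above. Finally, if $\mathrm{rank}(M)=3$, then $M$ is invertible and Theorem \ref{YeLim1} yields $\Toep(M) \leq 2\lfloor 3/2 \rfloor + 2 = 4$. In all cases $\Toep(M)\leq 4$, so $\Toep_3 \leq 4$.
\end{proof}
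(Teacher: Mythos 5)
Your proof is correct and follows exactly the paper's (implicit) argument: the upper bound is the rank-stratified assembly of Proposition \ref{rank:1}, the rank-$2$ proposition, and Theorem \ref{YeLim1}, and the lower bound comes from the diagonal counterexample $\mathrm{diag}(1,2,3)$ via Theorem \ref{thmdiag}. No gaps; the addition of the trivial rank-$0$ case is a harmless completeness touch.
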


\subsection{Toeplitz decomposition of matrices of size $4\times 4$}

Let $M$ be a $4\times 4$ matrix. If $M$ has rank $1$, then by Proposition \ref{rank:1}, $M$ can be expressed as the product of three Toeplitz matrices. If $M$ has rank $3$, then by Proposition \ref{rank:n-1}, $\Toep(M) \leq 8$ . If $M$ has rank $4$, then $M$ is invertible and, thus $\Toep(M) \leq 6$ (by Theorem \ref{YeLim1}). Therefore, we set ourselves the task of studying the remaining case: to study Toeplitz decompositions of matrices of rank $2$. 

\begin{prop} Let $M\in \mathcal M_{4\times 4}(\C)$ with $\mathrm{rank}(M) = 2$, then $\Toep (M ) \leq 9$.
\end{prop}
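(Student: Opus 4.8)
The goal is to bound $\Toep(M)$ for a rank-$2$ matrix $M \in \mathcal M_{4\times 4}(\C)$, and the natural strategy is to mimic the $3\times 3$ rank-$2$ argument: peel off one Toeplitz factor that kills a column (or brings $M$ to a simpler shape) and reduce to a case already understood. Concretely, since $\mathrm{rank}(M)=2$, exactly two columns are linearly independent and the remaining two are linear combinations of those. I would first reduce, by multiplying on the right by a single permutation-type Toeplitz matrix (as in the displayed $3\times 3$ reductions), to the situation where, say, columns $1$ and $2$ are the independent ones. Then, by multiplying on the right by a Toeplitz matrix of the shape
\[
\begin{pmatrix} 1 & 0 & \lambda_1 & \mu_1 \\ 0 & 1 & \lambda_2 & \mu_2 \\ 0 & 0 & 1 & \lambda_1 \\ 0 & 0 & 0 & 1 \end{pmatrix}
\]
one would hope to clear columns $3$ and $4$, landing on a matrix $\begin{pmatrix} C_1 & C_2 & 0 & 0\end{pmatrix}$ of rank $2$ whose last two columns vanish; but this exact clearing only works when the coefficient vectors are compatible with the Toeplitz constraints, so the cases must be split much as in the $3\times 3$ proof.

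**Key steps.** (1) Use Proposition~\ref{rank:1} and the corollary: any rank-$1$ block contributes $\le 3$ factors. (2) Observe that a $4\times 4$ matrix with two zero columns (in suitable positions) has rank $\le 2$ and — after a permutation Toeplitz factor — its nonzero part sits in a $4\times 2$ block; such a matrix should be writable as a product of few Toeplitz matrices by an argument analogous to Proposition~\ref{rank:1}, using lower-triangular Toeplitz matrices built from the two columns together with a shift matrix $S_4$ or $B_4$ to place them. Indeed a matrix whose only nonzero columns are two adjacent columns equal to given vectors $u,v$ can be realized as $T \cdot B_4^{k}$-type products: one writes the $4\times 2$ data as a rank-$\le 2$ matrix and factors it. (3) Handle the passage from general rank-$2$ $M$ to this two-nonzero-column form by right-multiplication by one Toeplitz matrix, splitting into cases according to which entries of the relevant columns vanish (the Toeplitz constraint on the clearing matrix forces case analysis, exactly as cases (a)--(g) do for $n=3$). (4) Add up: (at most) $1$ factor for the reduction $+$ the count for the simplified matrix, and pad with identity factors so all summands reach the claimed total $9 = 2\lfloor 4/2\rfloor + 5$, consistent with the invertible and rank-$3$ counts being smaller.

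**Main obstacle.** The hard part is the bookkeeping of the degenerate cases: just as the $3\times 3$ proof needed seven separate sub-cases (c) because a single Toeplitz matrix cannot simultaneously realize an arbitrary pair of column combinations, the $4\times 4$ version will need more, and one must check that in every sub-case the ``simpler'' matrix that remains genuinely admits a short Toeplitz factorization (often via Proposition~\ref{rank:1} applied to a rank-$1$ residue, or via an explicit $3$-factor decomposition like \eqref{eqn300}). A secondary subtlety is that when the two independent columns are \emph{not} adjacent, or when clearing one column forces a pivot into the ``wrong'' position, one needs an auxiliary reshuffling Toeplitz factor; ensuring the total never exceeds $9$ requires being slightly careful about whether reshuffling costs one or two extra factors. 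I expect the structure of the write-up to be: a short lemma that a $4\times 4$ matrix supported on two columns has Toeplitz number $\le 5$ (or so), then a case analysis reducing the general rank-$2$ matrix to that lemma at the cost of at most $4$ more factors.
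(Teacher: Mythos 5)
There is a genuine gap here: your argument rests on two unestablished pillars, and the first one fails generically. (i) The ``clearing'' step. In the $3\times 3$ rank-$2$ proof a single Toeplitz right-factor suffices because the one dependent column carries only two coefficients $(\lambda,\beta)$, which exactly fit the two free superdiagonal/subdiagonal slots of a Toeplitz matrix; in the $4\times 4$ rank-$2$ situation you must encode two dependent columns, i.e.\ four coefficients $(\lambda_1,\lambda_2,\mu_1,\mu_2)$, into a single Toeplitz factor that also fixes the two independent columns. The Toeplitz constraints then force, e.g., $T_{12}=T_{23}=T_{34}$ and $T_{13}=T_{24}$, which impose relations such as $\lambda_2=0$ and $\lambda_1=\mu_2$ on the coefficients: the matrix you display is not Toeplitz for generic parameters, and no single Toeplitz factor can do the clearing in general. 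This is not bookkeeping to be absorbed into sub-cases; it means your reduction scheme needs at least one more factor (or a different reduction altogether), and with it your count of $9$ is no longer justified, since you never actually tally the factors in any branch. (ii) The key lemma you invoke --- that a $4\times 4$ matrix supported on two columns has Toeplitz number at most $5$ ``by an argument analogous to Proposition~\ref{rank:1}'' --- is asserted, not proved, and it is not routine: a matrix of the form $(u\,|\,v\,|\,0\,|\,0)$ is equal to $A\cdot B_4^2$-type products only if $u$ and $v$ can be two consecutive columns of a single Toeplitz matrix, which requires the overlap relations $u_{i}=v_{i+1}$; for generic $u,v$ one needs a genuinely different (and longer) factorization, which you do not supply. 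Since every branch of your case analysis ends in one of these two unproved steps, the proposal as written does not establish the bound $\Toep(M)\leq 9$.

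For contrast, the paper takes a different and cheaper route that sidesteps both difficulties: write $M=P\cdot\mathrm{rref}(M)$ with $P$ invertible, split into the six possible pivot patterns of the rank-$2$ reduced echelon form, and show in each case that $\mathrm{rref}(M)$ (after further row operations absorbed into $P$) is an explicit product of at most $3$ Toeplitz matrices, typically the corner matrix $e_1e_4^{\mathsf T}$ (which is Toeplitz) times a triangular Toeplitz matrix. The invertible factor is then handled wholesale by Ye--Lim's bound $\Toep(P)\leq 2\lfloor 4/2\rfloor+2=6$, giving $6+3=9$. The essential idea you are missing is precisely this use of the invertible bound: it lets one pay a fixed price of $6$ for an arbitrary invertible left factor and only requires an explicit short Toeplitz factorization of a very rigid residue, instead of explicit factorizations of everything, which is where your sketch runs aground.
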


\begin{proof}
If $M$ has rank $2$, several cases arise, depending on the row reduced echelon form of $M$:

\begin{itemize}

\item Case 1: $\mathrm{rref}(M) = \left( \begin{array}{cccc} 
1 & 0 & a & b\\
0 & 1 & c & d\\
0 & 0 & 0 & 0\\
0 & 0 & 0 & 0\\
\end{array}\right)$. This case divides into 3 possibilities.

\begin{enumerate}
\item If $d\neq 0$, then there exists an invertible matrix $P$ such that:
$$M= P \cdot \left(\begin{array}{cccc} 
0 & d' & c' & 1 \\
0 & 0 & 0 & 0\\
0 & 0 & 0 & 0\\
1 & 0 & a & b \end{array}\right) =  P \cdot 
\left( \begin{array}{cccc} 
0 & 0 & 0 & 1 \\ 0 & 0 & 0 & 0\\ 0 & 0 &0 & 0\\ 1 & 0 & 0 & 0
\end{array}\right) \cdot 
\left( \begin{array}{cccc} 
1 & 0 & a & b \\ c' & 1 & 0 & a\\ d' & c' & 1 & 0\\ 0 & d' & c' & 1
\end{array}\right) \cdot 
$$
where $d'=\frac{1}{d}$ and $c' = \frac{c}{d}$.
\item If $d=0 \neq c$, then there exists an invertible matrix $P$ such that:
$$M= P \cdot  \underbrace{\left(\begin{array}{cccc} 
0 & 0 & c' & 1\\
0 & 0 & 0 & 0\\
0 & 0 & 0 & 0\\
1 & 0 & a & b\\
\end{array}\right)}
=
P \cdot \underbrace{\left( \begin{array}{cccc}
0 & 0 & 1 & 0 \\
0 & 0 & 0 & 0\\
0 & 0 & 0 & 0\\
1 & 0 & 0 & 0
\end{array}\right)}_{M_1}
\cdot \left(\begin{array}{cccc} 
1 & 0 & a & b \\
c' & 1 & 0 & a\\
0 & c' & 1 & 0 \\
0 & 0 & c' & 1
\end{array}\right)
$$
where $c'= \frac{1}{c}$. In this case $M_1$ is not a Toeplitz Matrix but
$$M_1 = 
\left(\begin{array}{cccc} 
0 & 0 & 0 & 1\\
0 & 0 & 0 & 0 \\
0 &0 &0 &0 \\
1 & 0 & 0 & 0
 \end{array} \right) \cdot 
\left(\begin{array}{cccc}  
0 & 0 & 1 & 0\\
0 & 0 & 0 & 1\\
0 & 0 & 0 & 0\\
1 & 0 & 0 & 0 
\end{array}\right). $$
\item If $c=d=0$, then adding to the second row $b$ times the first row, we know that there exist invertible matrices $P, P'$ such that
$$M= P \cdot \mathrm{rref}(M) = P'\cdot \left( 
\begin{array}{cccc}
1 & 0 & a & b \\
0 & 0 & 0 & 0 \\
0 & 0 & 0 & 0 \\
b & 1 & ab & b^2
\end{array}
\right) = P' \left( \begin{array}{cccc}0 & 0 & 0 & 1\\ 
0 & 0 & 0 & 0 \\
0 & 0 & 0 & 0 \\
1 & 0 & 0 & 0\end{array}\right) 
\left(\begin{array}{cccc} 
b & 1 & ab &  b^2 \\
a & b & 1 & ab \\
0 & a & b & 1 \\
1 & 0  & a & b
\end{array}\right).$$
\end{enumerate}


\item Case 2: $\mathrm{rref}(M) = \left( \begin{array}{cccc} 
1 & a & 0 & b\\
0 & 0 & 1 & c\\
0 & 0 & 0 & 0\\
0 & 0 & 0 & 0\\
\end{array}\right)$. This case divides into 2 further possibilities:

\begin{enumerate}
    \item If $c\neq 0$, then there exists an invertible matrix $P$ such that:
$$M = P \cdot \left(\begin{array}{cccc} 
0 & 0 & c' & 1 \\
0 & 0 & 0 & 0 \\
0 & 0 & 0 & 0 \\
1 & a & 0 & b \\
\end{array}\right) = P \cdot 
 \left( \begin{array}{cccc}0 & 0 & 0 & 1\\ 
0 & 0 & 0 & 0 \\
0 & 0 & 0 & 0 \\
1 & 0 & 0 & 0\end{array}\right)
\left( \begin{array}{cccc} 
1 & a & 0 & b \\
c' & 1 & a & 0 \\
0 & c' & 1 & a \\
0 & 0 & c' & 1
\end{array}\right)
$$
where $c' = \frac{1}{c}$.
\item If $c=0$, then adding to the second row $b$ times first row, we know that there exist invertible matrices $P, P'$ such that  
$$M= P \cdot \mathrm{rref}(M) = P'\cdot \left( 
\begin{array}{cccc}
1 & a & 0 & b \\
0 & 0 & 0 & 0 \\
0 & 0 & 0 & 0 \\
b & ab & 1 & b^2\\
\end{array}
\right) = P' \left( \begin{array}{cccc}0 & 0 & 0 & 1\\ 
0 & 0 & 0 & 0 \\
0 & 0 & 0 & 0 \\
1 & 0 & 0 & 0\end{array}\right) 
\left(\begin{array}{cccc} 
b & ab & 1 & b^2 \\
0 & b & 1 & ab \\
a & 0 & b & 1 \\
1 & a & 0 & b
\end{array}\right).$$
\end{enumerate}


\item Case 3: $\mathrm{rref}(M) = \left( \begin{array}{cccc} 
1 & a & b & 0\\
0 & 0 & 0 & 1\\
0 & 0 & 0 & 0\\
0 & 0 & 0 & 0\\
\end{array}\right)$. In this case, there exists an invertible matrix $P$ such that 

$$M= P \cdot \left(\begin{array}{cccc} 

1 & a & b & 0 \\
0 & 0 & 0 & 0 \\
0 & 0 & 0 & 0\\
0 & 0 & 0 & 1 \\
\end{array} \right)  = P \cdot 
\left( \begin{array}{cccc} 
1 & 0 & 0 & 0 \\ 0 & 0 & 0 & 0\\ 0 & 0 &0 & 0\\ 0 & 0 & 0 & 1
\end{array}\right) \cdot 
\left(\begin{array}{cccc} 
0 & 0 & 0 & 1\\
b & 0 & 0 & 0 \\
a & b & 0 & 0 \\
1 & a & b & 0
\end{array} \right).$$


\item Case 4: $\mathrm{rref}(M) = \left( \begin{array}{cccc} 
0 & 1 & 0 & a\\
0 & 0 & 1 & b\\
0 & 0 & 0 & 0\\
0 & 0 & 0 & 0\\
\end{array}\right)$. Here we have an invertible matrix $P$ such that 

$$M= P \cdot \left(\begin{array}{cccc} 

0 & 0 & 0 & 0 \\
0 & 0 & 0 & 0 \\
0 & 1 & b & a'\\
0 & 0 & 1 & b \\
\end{array} \right)  = P \cdot 
\left( \begin{array}{cccc} 
0 & 0 & 0 & 0 \\ 0 & 0 & 0 & 0\\ 1 & 0 &0 & 0\\ 0 & 1 & 0 & 0
\end{array}\right) \cdot 
\left(\begin{array}{cccc} 
0 & 1 & b & a'\\
0 & 0 & 1 & b \\
0 & 0 & 0 & 1 \\
0 & 0 & 0 & 0
\end{array} \right).$$


\item Case 5: $\mathrm{rref}(M) = \left( \begin{array}{cccc} 
0 & 1 & a & 0\\
0 & 0 & 0 & 1\\
0 & 0 & 0 & 0\\
0 & 0 & 0 & 0\\
\end{array}\right)$, then there exists an invertible matrix $P$ such that 

$$M= P \cdot \left(\begin{array}{cccc} 
0 & 1 & a & 0\\
0 & 0 & 0 & 0 \\
0 & 0 & 0 & 0 \\
0 & 0 & 0 & 1
\end{array} \right)  = P \cdot 
\left( \begin{array}{cccc} 
0 & 0 & 0 & 1 \\ 0 & 0 & 0 & 0\\ 0 & 0 &0 & 0\\ 1 & 0 & 0 & 0
\end{array}\right) \cdot 
\left(\begin{array}{cccc} 
0 & 0 & 0 & 1\\
a & 0 & 0 & 0\\
1 & a & 0 & 0 \\
0 & 1 & a & 0
\end{array} \right).$$

\item Case 6: $\mathrm{rref}(M) = \left( \begin{array}{cccc} 
0 & 0 & 1 & 0\\
0 & 0 & 0 & 1\\
0 & 0 & 0 & 0\\
0 & 0 & 0 & 0\\
\end{array}\right)$. In this case, $\mathrm{rref}(M)$ is itself a Toeplitz matrix, and there exists an invertible matrix $P$ such that 
$M=P\cdot \mathrm{rref}(M)$.

\end{itemize}
\end{proof}

All the information we have about $4 \times 4$ matrices crystallizes into the following Theorem:

\begin{thm}\label{thm:toep4}
$3 \leq \Toep_4 \leq 9$.
\end{thm}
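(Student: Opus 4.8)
The plan is to establish the two inequalities in $3 \leq \Toep_4 \leq 9$ separately, treating the upper bound as a straightforward corollary of the case analysis just carried out and the lower bound as a consequence of the $n=3$ obstruction combined with an embedding argument.

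For the upper bound $\Toep_4 \leq 9$, I would organize the argument by rank. If $\mathrm{rank}(M)=0$ then $M=0$ is Toeplitz. If $\mathrm{rank}(M)=1$, Proposition \ref{rank:1} gives $\Toep(M) \leq 3$. If $\mathrm{rank}(M)=3$, Proposition \ref{rank:n-1} writes $M = P T_1 T_2$ with $P$ invertible, and then Theorem \ref{YeLim1} applied to $P$ (invertible, $4 \times 4$) gives $\Toep(P) \leq 2\lfloor 4/2 \rfloor + 2 = 6$, hence $\Toep(M) \leq 6 + 2 = 8$. If $\mathrm{rank}(M)=4$, then $M$ is invertible and Theorem \ref{YeLim1} directly yields $\Toep(M) \leq 6$. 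Finally, if $\mathrm{rank}(M) = 2$, the Proposition immediately preceding this theorem shows $\Toep(M) \leq 9$. Taking the maximum over all these cases gives $\Toep_4 \leq 9$, and the bottleneck case is genuinely the rank-$2$ one, which is why the preceding proposition does the heavy lifting.

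For the lower bound $\Toep_4 \geq 3$, the cleanest route is to exhibit a single $4 \times 4$ matrix with $\Toep(M) \geq 3$. The natural candidate is the block matrix $M_3 \oplus (1)$ or, more simply, a $4 \times 4$ diagonal matrix $\mathrm{diag}(1,2,3,4)$ with $d\neq e \neq f \neq d$ among some three consecutive entries; but to reuse Theorem \ref{thmdiag} directly I would instead argue that if a $4\times 4$ matrix $M$ satisfied $M = T_1 T_2$ with $T_1, T_2$ Toeplitz, then a suitable $3 \times 3$ "corner behavior" would force a factorization contradicting Theorem \ref{thmdiag}. The sharper and safer approach is simply to verify, via Proposition \ref{pr:ecuacion} with $n=4$, $s=2$, that $\mathrm{diag}(1,2,3,4)$ is not a product of two Toeplitz matrices — but since that Gröbner computation is exactly the kind of thing the paper says is intractable for $n=4$, I would instead note that no $4\times 4$ diagonal matrix with at least three distinct diagonal entries can be written as $T_1T_2$: indeed $\Toep(\mathrm{diag}(1,2,3,4)) = 1$ fails since it is not Toeplitz (not a scalar multiple of $I$), and I would rule out $\Toep = 2$ by a direct structural argument on $4\times 4$ Toeplitz products, or alternatively by observing that if $\mathrm{diag}(d_1,d_2,d_3,d_4) = T_1 T_2$ then restricting to an appropriate $3\times 3$ principal pattern and using the freedom in the off-diagonal Toeplitz entries reduces to the $3\times 3$ diagonal case covered by Theorem \ref{thmdiag}. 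Combining, $\Toep_4 \geq 3$.

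The main obstacle is the lower bound: the reduction from "$M_4$ is not a product of two $4\times 4$ Toeplitz matrices" to the already-proven $3\times 3$ statement is not entirely automatic, because deleting a row and column of a Toeplitz matrix need not leave something I can control, and the product structure does not restrict cleanly to principal submatrices. The honest fix is to pick the explicit $4\times 4$ counterexample $\mathrm{diag}(1,2,3,4)$ and run (or cite) the direct verification that $1$ lies in the corresponding ideal $I$ of Proposition \ref{pr:ecuacion} for $s=2$ — this is a finite, if larger, analogue of the polynomial identity \eqref{eqn200} used in Theorem \ref{thmdiag}, and producing explicit multipliers $q_{i,j}$ (or simply invoking Buchberger's algorithm, whose termination is guaranteed) suffices. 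Everything else — the rank-stratified upper bound — is bookkeeping over results already in hand.
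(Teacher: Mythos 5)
Your upper bound argument is exactly the paper's: stratify by rank, use Proposition \ref{rank:1} for rank $1$, Proposition \ref{rank:n-1} together with Theorem \ref{YeLim1} for rank $3$ (giving $\leq 8$), Theorem \ref{YeLim1} directly for rank $4$ (giving $\leq 6$), and the preceding rank-$2$ proposition for the bottleneck value $9$. That half is fine.

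The genuine gap is the lower bound $\Toep_4 \geq 3$: you never actually prove it. Your first route (restricting a hypothetical factorization $M = T_1 T_2$ to a $3\times 3$ principal pattern so as to invoke Theorem \ref{thmdiag}) fails for the reason you yourself flag — the $3\times 3$ leading block of $T_1T_2$ is not the product of the $3\times 3$ leading blocks of $T_1$ and $T_2$, because of cross terms coming from the fourth row and column, so nothing in the $3\times 3$ analysis transfers. Your fallback — that $\mathrm{diag}(1,2,3,4)$ is not a product of two $4\times 4$ Toeplitz matrices, to be certified by a Nullstellensatz identity or Buchberger — is left as an unperformed computation; it is not established anywhere in the paper either, and it is far from obvious (recall that even the $3\times 3$ diagonal case needed the explicit certificate \eqref{eqn200}, and a priori the $4$-dimensional family of diagonal matrices could sit inside the image of the two-factor map). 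So as written, the statement $\Toep_4 \geq 3$ rests on an unverified claim. The fix is much cheaper and is what the paper relies on: dimension counting, as remarked right after Theorem \ref{YeLim1}. The set of $4\times 4$ matrices admitting a Toeplitz decomposition of length $2$ is the image of the polynomial multiplication map $\C^{7}\times\C^{7}\to\C^{16}$ (each $4\times4$ Toeplitz matrix has $2\cdot 4-1=7$ free entries), hence is contained in a constructible set of dimension at most $14<16$; therefore it is a proper subset of $\C^{4\times 4}$, and any matrix outside it has $\Toep(M)\geq 3$, i.e. $\Toep_4\geq \left\lfloor \frac{4}{2}\right\rfloor+1=3$. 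Note this shortcut is special to the situation $(2n-1)s<n^2$; it would not give $\Toep_3\geq 3$, which is why the $n=3$ lower bound genuinely required the counterexample $M_3$.
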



\section{Some further questions}\label{further}

\subsection{Chessboard-type constructions for diagonal matrices.} Consider the analogue of $M_3$ in $\C^{n \times n}$, $M_n$, a diagonal matrix with entries $1,\ldots, n$. From \eqref{eqn300} one could be lead to believe that $M_n$ is always a difficult matrix to factorize, since the following factorization in $n$ factors is very natural, and thus finding a shorter expression may seem unlikely: \[M_n= \prod_{j=1}^n Q_j, \qquad Q_j = \begin{pmatrix} 0 & j \\ I & 0\end{pmatrix}\]
In this regard, we can show that $M_4$ is not a counterexample to Conjecture \ref{conj1} for $n=4$.
Indeed, $M_4 = T_1 T_2 T_3$ where
\[\begin{array}{ccc}
T_1 = \begin{pmatrix} 0 & -1 & 0 & 2 \\ 1 & 0 &  -1 & 0 \\ 0 & 1 & 0 & -1 \\ \frac{-1}{3} & 0 & 1 & 0 \end{pmatrix},& T_2 = \begin{pmatrix} 0 & 1 & 0 & 3 \\ 1 & 0 &  1 & 0 \\ 0 & 1 & 0 & 1 \\ \frac{1}{2} & 0 & 1 & 0 \end{pmatrix},& T_3 = \begin{pmatrix} 0 & 0 & 6 & 0 \\ 0 & 0 &  0 & 6 \\ 1 & 0 & 0 & 0 \\ 0 & 1 & 0 & 0 \end{pmatrix}.\end{array}\]

In this example, we notice that the rows and columns of $T_1$, $T_2$ and $T_3$ divide into two mutually orthogonal families of vectors: even columns of $T_1$ and $T_2$ and odd ones of $T_3$ are orthogonal to the rest. We call this pattern, a \emph{chessboard}, and such constructions seem likely to appear when factorizing any chessboard-type matrix (in particular, diagonal ones), since the dimension counts are improved in favour of a possible factorization. We were able to show that \emph{generic} $4 \times 4$ diagonal matrices (with diagonal $d_1, d_2, d_3, d_4$) can be factored with 3 explicit chessboard Toeplitz matrices.
The following prompt in Sage:

 {\small \begin{verbatim}
   R.<a0,a6,b0,b6,c1,c5,d1,d2,d3,d4>=PolynomialRing(QQ,10,order='lex');
   A=matrix(R,[[0,1,0,a6],[1,0,1,0],[0,1,0,1],[a0,0,1,0]]);
   B=matrix(R,[[0,1,0,b6],[-1,0,1,0],[0,-1,0,1],[b0,0,-1,0]]);
   C=matrix(R,[[0,0,c5,0],[0,0,0,c5],[c1,0,0,0],[0,c1,0,0]]);
   M=matrix(R,[[d1,0,0,0],[0,d2,0,0],[0,0,d3,0],[0,0,0,d4]]);
   S=A*B*C-M;
   L = [S[i][j] for i in range(4) for j in range(4)];
   I=R.ideal(L)
   G=I.groebner_basis('libsingular:slimgb')
   G
\end{verbatim}}

\noindent outputs a Gröbner basis $G$ with $20$ polynomials satisfying that $G \cap \mathbb C[d_1,d_2,d_3,d_4] = \emptyset$ (which means that $J \cap \mathbb C[d_1,d_2,d_3,d_4] = \{0\}$, see, e.g., \cite[Theorem 2 in Chapter 3\S 1]{CLO}), and including the following polynomials:
\begin{itemize}
\item $f_1 =  c_5 (d_1 d_4 - d_2 d_3) + d_1 d_3 d_4 - d_2 d_3 d_4$,
\item $f_2 = c_1 d_4 - c_5 d_2 - d_2 d_4$,
\item $f_3 = b_6 (d_1 d_3 - d_1 d_4) + d_1 d_3 - d_2 d_3$,
\item $f_4 = b_0 (d_1 d_4 - d_2 d_4) - d_2 d_3 + d_2 d_4$,
\item $f_5 = a_6 d_2 + b_6 d_1 + d_1 - d_2$, and
\item $f_6 = a_0 d_3 - b_0 d_4 - d_3 + d_4$.
\end{itemize}
By the extension theorem (see, e.g., \cite[Theorem 4 in Chapter 5\S 6]{CLO}) and Proposition \ref{pr:ecuacion}, this means that for every $(d_1,d_2,d_3,d_4) \in \mathbb C^4$ such that \[d_1 d_2 d_3 d_4 (d_1d_4 - d_2d_3) (d_3 - d_4) (d_1 - d_2)\neq 0,\] 
 the diagonal matrix is a product of three chessboard Toeplitz.

However, at the moment, we do not have any reason to discard $M_5$ as a counterexample (and this could happen with $\Toep(M_5) =4$, or even $\Toep(M_5)=5$).
We can only exclude \emph{certain} factorizations with 3 chessboard Toeplitz matrices $M_5$.
Perhaps, for $n \geq 5$ one should decompose the columns and rows of the Toeplitz matrices into more general classes of mutually orthogonal subsets.

\subsection{Toeplitz decomposition of general matrices} We have showed that 
\begin{center}
$3 \leq \Toep_3 \leq  4$ and $3 \leq \Toep_4 \leq 9$.
\end{center}
In both cases we have stratified the problem by rank. For invertible matrices we have invoked Ye and Lim's result Theorem \ref{YeLim1}. For the remaining cases we have produced explicit decompositions involving products of Toeplitz and invertible matrices. Our methods are ad-hoc and we wonder if they can be systematized. In this line, we propose the following conjectures:

We will denote by $\Toep_n^{Inv}$ the minimal number of Toeplitz factors that are needed in order to factor any invertible $n\times n$ matrix.  From Theorem \ref{YeLim1}, we know $\Toep_n^{Inv}\leq 2 \left\lfloor \frac{n}{2} \right\rfloor + 2$
\begin{conj}
\label{super-conjecture}
There exists a constant $C > 0$ such that every matrix can be expressed as a product of an invertible matrix and $C$ Toeplitz matrices. In particular, 
$$\Toep_n \leq \Toep_n^{Inv} + C. $$
\end{conj}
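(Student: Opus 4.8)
The natural plan is to stratify by rank and interpolate between Propositions~\ref{rank:1} and~\ref{rank:n-1}. Given $M$ of rank $r$, write $M = B\tilde C$ with $B \in GL_n$ and $\tilde C$ \emph{any} matrix having the same row space as $M$ (say $\mathrm{rref}(M)$); then $\Toep(M) \le \Toep(B) + \Toep(\tilde C) \le \Toep_n^{Inv} + \Toep(\tilde C)$, so it would be enough to bound $\Toep(\tilde C)$ independently of $n$. Since $\tilde C$ is free except for its row space, the target statement becomes: for every $V \in \mathrm{Gr}(r,n)$ there is a product of at most $C$ Toeplitz matrices whose row space is $V$. Proposition~\ref{rank:1} gives this for $r=1$ (with $C=3$), Proposition~\ref{rank:n-1} for $r=n-1$ (with $C=2$), and the arguments of Section~\ref{rankstuff} settle $n\le 4$.

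However, this plan — and, I claim, Conjecture~\ref{super-conjecture} itself — must fail for $n$ large, for a dimension reason. Write a hypothetical expression of $M$ as an invertible matrix inserted among Toeplitz factors, i.e. $M = A\,P\,B$ with $P \in GL_n$ and $A,B$ each a product of at most $C$ Toeplitz matrices (allowing $A$ or $B$ empty, which covers $M=PB$ and $M=AP$). Sylvester's rank inequality gives $\mathrm{rank}(A)+\mathrm{rank}(B) \le n + \mathrm{rank}(M)$, hence $\min(\mathrm{rank}(A),\mathrm{rank}(B)) \le \tfrac{n+r}{2}$ where $r=\mathrm{rank}(M)$; transposing if needed (the conjecture is transpose-invariant) we may assume $\mathrm{rank}(A) \le \tfrac{n+r}{2}$. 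As $P$ is invertible, $\mathrm{colsp}(M) \subseteq \mathrm{colsp}(AP) = \mathrm{colsp}(A)$. Now $\mathrm{colsp}(A)$ ranges over the column spaces of products of at most $C$ Toeplitz matrices of rank at most $\tfrac{n+r}{2}$, a constructible family of dimension at most $C(2n-1)$; and $\mathrm{colsp}(M)$ is an $r$-dimensional subspace of such a space, contributing at most $r(\tfrac{n+r}{2}-r) = \tfrac{r(n-r)}{2}$ further dimensions. So the set of column spaces realizable by matrices $M$ of this special form is a constructible subset of $\mathrm{Gr}(r,n)$ of dimension at most
\[
C(2n-1) + \tfrac{r(n-r)}{2}.
\]
Since $\dim \mathrm{Gr}(r,n) = r(n-r)$, taking $r = \lfloor n/2\rfloor$ (so $r(n-r) = \lfloor n^2/4\rfloor$) this bound is strictly less than $\dim\mathrm{Gr}(r,n)$ as soon as $C(2n-1) < \tfrac12\lfloor n^2/4\rfloor$, which holds for every fixed $C$ once $n$ is large enough. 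Hence the realizable column spaces form a \emph{proper} subvariety of the irreducible Grassmannian $\mathrm{Gr}(\lfloor n/2\rfloor,n)$; any matrix of rank $\lfloor n/2\rfloor$ with column space outside it — a generic one — cannot be written as an invertible matrix times at most $C$ Toeplitz matrices in any order, disproving Conjecture~\ref{super-conjecture}.

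What this dimension count does \emph{not} touch is the weaker consequence $\Toep_n \le \Toep_n^{Inv} + C$, which asserts something about \emph{minimal} Toeplitz factorizations rather than ones of the shape ``invertible times few Toeplitz''. I would therefore restate the open problem as: is $\Toep_n - \Toep_n^{Inv}$ bounded? The same count shows that any structural version ``$M = $ invertible $\cdot$ $C(n)$ Toeplitz'' forces $C(n) = \Omega(n)$, so structurally nothing improves on the $O(n)$ of Theorem~\ref{YeLim1}. Consequently, establishing $\Toep_n - \Toep_n^{Inv} = O(1)$ — should it be true — would require exhibiting, for the generic rank-$\lfloor n/2\rfloor$ matrices above, short Toeplitz factorizations that genuinely do not factor through one invertible matrix; producing these (or, alternatively, a matching lower bound on $\Toep_n^{Inv}$) is where I expect the main obstacle to lie.
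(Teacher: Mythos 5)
There is nothing in the paper to compare your argument with: Conjecture~\ref{super-conjecture} is posed as an open problem, the only evidence offered being the explicit observation that $C=3$ works for $n=3,4$ (Section~\ref{rankstuff}). So your proposal is not, and could not be, a reconstruction of a proof; it is a claimed refutation of the structural sentence of the conjecture, together with the (correct) remark that the displayed inequality $\Toep_n \leq \Toep_n^{Inv}+C$ is neither implied to fail nor established by that refutation, since minimal Toeplitz factorizations of a singular matrix need not pass through a single invertible factor.

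As for the refutation itself, the core count is sound: products of at most $C$ Toeplitz matrices form a constructible family of dimension at most $C(2n-1)$, so the row spaces (or column spaces) they realize within a fixed rank stratum form a constructible subset of $\mathrm{Gr}(r,n)$ of dimension at most $C(2n-1)$, which cannot exhaust $\mathrm{Gr}(\lfloor n/2\rfloor,n)$ (dimension $\sim n^2/4$) once $n$ is large. For the reading in which the invertible factor sits at one end — the form actually produced in Propositions~\ref{rank:1} and~\ref{rank:n-1} and in the $3\times 3$ and $4\times 4$ analyses — this already finishes the job without Sylvester: if $M=PB$ with $P$ invertible then $\mathrm{rowsp}(M)=\mathrm{rowsp}(B)$, so any rank-$\lfloor n/2\rfloor$ matrix whose row space avoids the bad locus is a counterexample. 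For the general placement $M=APB$ your Sylvester argument is the right idea, but the step ``transposing if needed we may assume $\mathrm{rank}(A)\leq\tfrac{n+r}{2}$'' is a per-factorization dichotomy, not a WLOG about $M$: a given $M$ could admit factorizations in which the small-rank factor is $B$, and then it is $\mathrm{rowsp}(M)$, not $\mathrm{colsp}(M)$, that is constrained. Hence ``any matrix with column space outside the bad set'' is not justified as written; what your count actually shows is that a counterexample must have \emph{both} its column space and its row space outside the respective proper constructible loci — which a generic rank-$\lfloor n/2\rfloor$ matrix does, so your conclusion survives after this patch. The resulting picture, which you state correctly, is that the structural form of Conjecture~\ref{super-conjecture} forces $C=\Omega(n)$, and the genuinely open question is the purely numerical one, whether $\Toep_n-\Toep_n^{Inv}$ is bounded; this is a useful sharpening of how the conjecture ought to be read, and worth communicating to the authors.
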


Of course this is a subproblem of Ye-Lim's own one, since $\Toep^{Inv}_n$ is at least the generic lower bound $\left\lfloor \frac{n}{2} \right\rfloor + 1$. We have proven that for $n=3$ and $n=4$, it suffices to take $C = 3$. 

Some matrices that have appeared when factorizing lower rank matrices were ones that we can dub quasi-Toeplitz. We say that a matrix is \emph{quasi-Toeplitz} if it is the result of replacing, in a Toeplitz matrix, some of the rows by zero rows. They seem particularly easy to decompose into Toeplitz factors.

\begin{conj}
Every $n\times n$ matrix is row equivalent to a quasi-Toeplitz matrix. 
\end{conj}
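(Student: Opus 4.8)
The plan is to translate the conjecture into a statement about subspaces of $\C^n$ and then test it Grassmannian by Grassmannian. Since two matrices are row equivalent exactly when they have the same row space, and since every $M\in\C^{n\times n}$ is row equivalent to its reduced row echelon form, the conjecture is equivalent to: \emph{every subspace $V\subseteq\C^n$ is the row space of some quasi-Toeplitz matrix}. Unwinding the definition of quasi-Toeplitz, the row space of such a matrix is precisely $\mathrm{span}\{\,\mathrm{row}_i(T):i\in S\,\}$ for some Toeplitz matrix $T\in\C^{n\times n}$ and some index set $S\subseteq\{1,\dots,n\}$ (the complement of the set of rows one zeroes out). So the target is: for every $V\subseteq\C^n$ there exist a Toeplitz matrix $T$ and a set $S$ with $V=\mathrm{span}\{\,\mathrm{row}_i(T):i\in S\,\}$.

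The first step is to dispose of the forced ranks. If $\dim V=0$, take the zero matrix. If $\dim V=n$, then $V=\C^n$, and any invertible Toeplitz matrix (for instance the shift circulant, or the lower-triangular Toeplitz matrix with $1$'s on and below the diagonal) has row space $\C^n$, so $M$ is row equivalent to it. If $\dim V=1$, say $V=\C v$, take any Toeplitz matrix whose first row is $v$ and zero out the other $n-1$ rows. For intermediate ranks the natural line of attack mirrors the ad hoc constructions used in Section~\ref{rankstuff} for $n\le 4$: put $V$ in reduced echelon form with pivot columns $c_1<\dots<c_r$, use that the $i$-th row of a triangular Toeplitz matrix has its leading nonzero entry in column $i$, and try to choose the remaining Toeplitz parameters so that — after permuting rows (an invertible operation), rescaling, and possibly replacing the echelon basis of $V$ by a more convenient one — the rows of $T$ at positions $c_1,\dots,c_r$ form a basis of $V$. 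One would set this up as an induction on $r$, stripping off one row of a Toeplitz matrix at a time and recursing on a subspace of smaller dimension sitting in a shorter Toeplitz window.

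The hard part, and where I expect the argument to break — fatally, for all but small $n$ — is a parameter count. An $n\times n$ Toeplitz matrix has only $2n-1$ free entries, while the Grassmannian $\mathrm{Gr}(r,n)$ of $r$-dimensional subspaces of $\C^n$ is irreducible of dimension $r(n-r)$. For a \emph{fixed} index set $S$, the family of subspaces of the form $\mathrm{span}\{\,\mathrm{row}_i(T):i\in S\,\}$, $T$ Toeplitz, is the image of an algebraic map from the $(2n-1)$-dimensional space of Toeplitz matrices, hence a constructible subset of $\mathrm{Gr}(r,n)$ of dimension at most $2n-1$; if $r(n-r)>2n-1$ its closure is a proper closed subvariety. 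There are only finitely many subsets $S$, so the union of these families is contained in a finite union of proper closed subvarieties of the irreducible variety $\mathrm{Gr}(r,n)$, which cannot exhaust $\mathrm{Gr}(r,n)$. Taking $r=\lfloor n/2\rfloor$ one has $r(n-r)=\lfloor n/2\rfloor\lceil n/2\rceil>2n-1$ as soon as $n\ge 8$, so in every dimension $n\ge 8$ there are matrices of rank $\lfloor n/2\rfloor$ that are \emph{not} row equivalent to any quasi-Toeplitz matrix; the conjecture as stated fails there.

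Since the inequality runs the other way for $n\le 7$ (for example $n=7$ gives $\lfloor 7/2\rfloor\lceil 7/2\rceil=12\le 13=2n-1$), which is consistent with the paper's treatment of $n\le 4$, the realistic version to aim for is: \emph{every $n\times n$ matrix with $n\le 7$ is row equivalent to a quasi-Toeplitz matrix}. For this, the stratification by rank from Section~\ref{rankstuff} together with the explicit constructions above handles the extreme ranks, and Buchberger's algorithm (as in Proposition~\ref{pr:ecuacion} and the discussion around it) settles the remaining finitely many cases in principle; the genuinely delicate instances are rank $\lfloor n/2\rfloor$ for $n=6$ and $n=7$, where the dimension inequality is nearly tight.
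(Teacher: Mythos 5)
This statement is one of the paper's own conjectures; no proof of it appears anywhere in the paper, so there is nothing of ours to compare your argument against. More to the point, your proposal is not a proof of the statement: its substantive content is a \emph{refutation} for $n\ge 8$, and that refutation looks sound. Row equivalence of $n\times n$ matrices is equality of row spaces, and the row space of a quasi-Toeplitz matrix with surviving row set $S$ is $\Span\{\mathrm{row}_i(T)\,:\,i\in S\}$ for some Toeplitz $T$. For fixed $S$ and fixed rank $r$, these subspaces are the image of a morphism defined on a constructible subset of the $(2n-1)$-dimensional space of Toeplitz matrices, hence (Chevalley) a constructible subset of $\mathrm{Gr}(r,n)$ of dimension at most $2n-1$; since there are finitely many choices of $S$ and $\mathrm{Gr}(r,n)$ is irreducible of dimension $r(n-r)$, whenever $r(n-r)>2n-1$ a generic $r$-dimensional subspace is the row space of no quasi-Toeplitz matrix, and any matrix with that row space is a counterexample. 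Taking $r=\lfloor n/2\rfloor$ this inequality holds for every $n\ge 8$ (and more generally the obstruction applies to every rank $r$ with $r(n-r)>2n-1$), so the conjecture as literally stated fails in those dimensions; the conclusion to draw is that the conjecture needs to be restricted (to small $n$, or to ranks near $0$ or $n$), not that your count is at fault. Note that this does not affect Conjecture \ref{super-conjecture}, for which quasi-Toeplitz matrices were only intended as a tool.

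What your proposal does not deliver is the positive statement you substitute in the range where the dimension count is silent ($n\le 7$). The extreme ranks $0$, $1$ and $n$ are handled correctly, but the inductive scheme for intermediate ranks (``stripping off one row of a Toeplitz matrix at a time and recursing'') is only sketched, with no verification that the required Toeplitz parameters can actually be chosen compatibly, and the appeal to Buchberger's algorithm settles these cases only ``in principle'' --- as the discussion around Proposition \ref{pr:ecuacion} already stresses, such computations are impractical well before the delicate cases you identify (rank $3$ for $n=6,7$, where $r(n-r)$ and $2n-1$ nearly coincide). So the honest summary of your argument is: the conjecture is false for $n\ge 8$, and the weakened version for $5\le n\le 7$ (or, for general $n$, for ranks with $r(n-r)\le 2n-1$) remains open.
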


We believe it would be interesting to devise algorithms that receive as input an $n \times n$ matrix and produce a decomposition as a product of a reasonable (linear in $n$) number of Toeplitz factors. Also, it would be interesting to describe methods that receive as input a square $n \times n$ matrix $A$ and a positive integer $s$, and output whether $A$ admits a decomposition into at most $s$ Toeplitz factors or not. Proposition \ref{pr:ecuacion} goes in this direction and translates this problem into the one of deciding whether a system of polynomial equations admits a solution. Nevertheless, due to the size of the polynomial system, this approach is impractical even for small values of $n$ and $s$.

\subsection{The status of the conjecture.}
We were only able to disprove Conjecture \ref{conj1} in the case that $n=3$ but we do believe that counterexamples could exist in all odd dimensions: by choosing $\left \lfloor \frac{n}{2} \right \rfloor+1$ Toeplitz matrices of size $n \times n$, we have $(2n-1)\cdot (\left \lfloor \frac{n}{2} \right \rfloor+1)$ degrees of freedom in total. However, if a factorization $T_1 \ldots T_k$ exists then we can scale each of $T_1, \ldots, T_{k-1}$ by any $\lambda_1, \ldots \lambda_{k-1}$ multiplicative factor and divide $T_k$ by their product. The result will be a new valid factorization, and thus we loose, at least $\left \lfloor \frac{n}{2} \right \rfloor$ dimensions in our count. For the factorization to give a prefixed matrix $M \in \C^{n \times n}$, we need to impose $n^2$ restrictions. When $n$ is \emph{even}, this will leave $n-1$ truly free variables more than needed. When $n$ is \emph{odd}, we get exactly the amount of variables needed. This indicates that the conjecture might be more likely when $n$ is even. Adding one Toeplitz factor for odd $n$ would yield $2n-2$ additional true degrees of freedom, making it somehow more likely than the current conjecture for even $n$. We propose a reasonable substitution for Conjecture \ref{conj1}:
\begin{conj} Let $n >1$. Then
 \[\Toep_n= \left \lceil \frac{n}{2} \right \rceil +1.\]
\end{conj}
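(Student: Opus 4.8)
The plan is to split the equality into the two inequalities $\Toep_n \geq \lceil n/2 \rceil + 1$ and $\Toep_n \leq \lceil n/2 \rceil + 1$, the first being essentially a sharpening of dimension counting and the second the genuinely hard direction. For the lower bound I would work with the polynomial map $\Phi_s \colon (\C^{2n-1})^s \to \C^{n\times n}$ sending $(T_1,\dots,T_s)$ to $T_1\cdots T_s$. The scaling action $(T_1,\dots,T_s)\mapsto(\lambda_1 T_1,\dots,\lambda_{s-1}T_{s-1},(\lambda_1\cdots\lambda_{s-1})^{-1}T_s)$ shows that every fiber of $\Phi_s$ over an invertible matrix has dimension at least $s-1$, so $\dim \overline{\operatorname{im}\Phi_s}\leq (2n-2)s+1$. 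When $n$ is even this is strictly below $n^2$ exactly when $s\leq n/2$, which already gives $\Toep_n\geq n/2+1=\lceil n/2\rceil+1$. When $n$ is odd the count only yields $\Toep_n\geq (n+1)/2=\lceil n/2\rceil$, so one extra unit must be extracted by exhibiting, for each odd $n$, a matrix provably not a product of $\lceil n/2\rceil$ Toeplitz matrices; the matrices $M_n$, or more robustly generic diagonal matrices, are the natural candidates, and Theorem~\ref{thmdiag} carries this out for $n=3$ via an explicit Nullstellensatz certificate. The obstacle here is uniformity: Buchberger's algorithm is infeasible beyond tiny $n$, so one would need to package these obstructions into an inductive argument — for instance, extracting from a hypothetical length-$\lceil n/2\rceil$ factorization of $M_n$ a length-$\lceil (n-2)/2\rceil$ factorization of (a suitable compression of) $M_{n-2}$, contradicting the previous odd case.

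For the upper bound I would stratify $\C^{n\times n}$ by rank, exactly as in Theorems~\ref{thm:toep3} and~\ref{thm:toep4}. The low-rank strata should be handled by the explicit constructions of Propositions~\ref{rank:1} and~\ref{rank:n-1} and their generalizations, which realize a rank-deficient $M$ as (a few Toeplitz matrices)$\,\cdot\,$(an invertible matrix); combined with Conjecture~\ref{super-conjecture} this reduces everything to the invertible stratum. On that stratum the generic bound is already Ye–Lim's, so what is missing is the non-generic invertible matrices, and this is where the main difficulty lies: Theorem~\ref{YeLim1} only offers $2\lfloor n/2\rfloor+2$ there, roughly twice the conjectured value. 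My main tool would be the \emph{chessboard} construction sketched in Section~\ref{further}: splitting the $n$ indices into two color classes so that the Toeplitz constraints on the factors decouple into two independent smaller systems improves the dimension bookkeeping enough to plausibly cover the missing matrices. Concretely I would try to prove that, after multiplying by one invertible matrix, every matrix is row-equivalent to a chessboard-type (in particular quasi-Toeplitz) matrix — a weak form of the last conjecture in Section~\ref{further} — and then factor chessboard matrices as a product of $\lceil n/2\rceil+1$ chessboard Toeplitz factors by solving the two decoupled systems explicitly, generalizing the $M_4$ example.

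The main obstacle, in both directions, is passing from finitely many computer-verified instances to all $n$: the odd-$n$ lower bound needs a uniform family of certified obstructions, and the upper bound needs a uniform factorization scheme for the non-generic invertible matrices. I expect the upper bound to be the true bottleneck — at present it is not even known whether $\Toep_3=3$ rather than $4$ — and a complete proof will likely require either a genuine global parametrization showing the map $\Phi_{\lceil n/2\rceil+1}$ is surjective (not merely dominant), or a deformation/limiting argument controlling the boundary of $\operatorname{im}\Phi_s$, since the image is only guaranteed to be constructible and could a priori omit boundary matrices. A reasonable first milestone would be to settle $n=3$ and $n=4$ completely (closing the gaps in \eqref{eqnToep3} and \eqref{eqnToep4}) and to establish Conjecture~\ref{super-conjecture} with an explicit constant for all $n$, which would already pin down the conjectured value once $\Toep_n^{Inv}$ is understood.
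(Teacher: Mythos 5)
The statement you are addressing is a \emph{conjecture} in the paper: the authors offer no proof, only the dimension-counting heuristic of the final section, so your text cannot match a proof that does not exist, and as written it is a research program rather than an argument. The one piece you essentially do establish, the lower bound $\Toep_n \geq \lceil n/2\rceil +1$ for \emph{even} $n$ via the fiber-dimension count for $\Phi_s$ together with the scaling action, is exactly the paper's own heuristic (and the concrete bounds in \eqref{eqnToep3} and \eqref{eqnToep4} come from that count plus, for $n=3$, Theorem~\ref{thmdiag}). For odd $n$ your extra unit rests on an unjustified step: compressing or truncating a product $T_1\cdots T_s$ of Toeplitz matrices does not produce a product of Toeplitz compressions, so nothing extracted from a hypothetical length-$\lceil n/2\rceil$ factorization of $M_n$ is known to be a length-$\lceil (n-2)/2\rceil$ Toeplitz factorization of anything related to $M_{n-2}$; the only known odd-case obstruction is the explicit Nullstellensatz certificate of Theorem~\ref{thmdiag} for $n=3$, and no uniform family of such certificates is available, since Proposition~\ref{pr:ecuacion} is computationally out of reach already for $n=4$, $s=3$.

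The upper-bound half has a structural flaw beyond its reliance on the unproven Conjecture~\ref{super-conjecture} and the quasi-Toeplitz conjecture. Stratifying by rank and invoking Proposition~\ref{rank:n-1} writes a rank-$(n-1)$ matrix as $P\,T_1T_2$ with $P$ invertible, which can only yield $\Toep(M)\leq \Toep_n^{Inv}+2$; since $\Toep_n^{Inv}\geq \lfloor n/2\rfloor+1$ by the same dimension count, this is at least $\lfloor n/2\rfloor+3>\lceil n/2\rceil+1$, so this route cannot reach the conjectured value even on that stratum, let alone on the invertible stratum where Theorem~\ref{YeLim1} only gives $2\lfloor n/2\rfloor+2$. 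The chessboard construction is verified in the paper only for \emph{generic} diagonal $4\times 4$ matrices, and your proposed strengthening (every matrix row-equivalent, after one invertible factor, to a chessboard matrix factorable into $\lceil n/2\rceil+1$ chessboard Toeplitz factors) is itself an open claim with no supporting argument. You correctly flag the dominance-versus-surjectivity issue for $\Phi_{\lceil n/2\rceil+1}$, but that is precisely the open problem: as the paper stresses, even the exact values of $\Toep_3$ and $\Toep_4$ are unknown, so every load-bearing step of your plan remains unproven.
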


A way in which this could be established passes by providing a more explicit understanding of what matrices have the generic behavior prescribed by Ye and Lim.




\subsection*{Acknowledgements}

 The authors would like to thank Professor J. Moro, J. Gouveia and C. Beltr\'an for helpful discussions.


\end{document}